\newcommand{\R}{\mathbb{R}}
\newcommand{\N}{\mathbb{N}}
\newcommand{\D}{\Delta}
\theoremstyle{plain}
\newtheorem{theo}{Theorem}[section]
\newtheorem{prop}[theo]{Proposition}
\newtheorem{lemma}[theo]{Lemma}
\theoremstyle{remark}
\newtheorem{rem}{Remark}
\newtheorem*{notation}{Notation}
\theoremstyle{definition}
\title{\textsc{\Large{On the definition of zero resonances for the Schr\"odinger operator with optimal scaling
			potentials}}}
\date{}
\author{\textsc{\normalsize Viviana Grasselli}}
\begin{document}
	
	\maketitle
	\begin{abstract}
		
		We consider the Schr\"{o}dinger operator $-\Delta + V$ on $\mathbb{R}^n$ with potential in the Lorentz space $L^{n/2,1}$ and we find necessary and sufficient conditions for zero to be a resonance or an eigenvalue. 
		We consider functions with gradient in $L^2$ and that verify the equation $(-\D + V)\psi =0$, namely $\ker_{\dot{H}^1}(-\Delta +V)$. We prove that a function in this set is either in a weak Lebesgue space, $L^{\frac{n}{n-2},\infty}$, or in $L^2$, in the latter case we have a zero eigenfunction. The set of eigenfunctions is the hyperplane of functions that are orthogonal to $V$, furthermore we show that under some classic orthogonality conditions a zero eigenfunction belongs to $L^{1, \infty}$ or $L^1$. We study dimensions $n\geq 3$ and in dimension three we generalize a result proved by Beceanu. 
		
	\end{abstract}

	\thispagestyle{footer}
	\section{Introduction}

	We consider the Schr\"{o}dinger operator $-\Delta + V$ on $\mathbb{R}^n$ with $n\geq 3$. The evolution of solutions of the time-dependent Schr\"{o}dinger equation is influenced by the the spectrum of the operator $-\Delta +V$, in particular by the bottom of it and the nature of the zero frequency: whether it is in the spectrum and if it is an eigenvalue or a resonance. Roughly speaking, zero is a resonance if there is a solution to the equation $(-\Delta + V)\psi=0$ which does not decay fast enough to be in $L^2$ and is usually assumed to belong in some kind of weighted space. This solution is a resonant state. 
	
	Local energy decay for solutions of dispersive equations, for example, is tightly linked to the spectral properties of the Schr\"odinger operator. First results in this direction were proved in three dimensional weighted $L^2$ spaces in \cite{rauch}, \cite{jen_kato}, in higher dimension in \cite{jensen_d4}, \cite{jensen_dim>5} and for more general elliptic operators in \cite{murata}. For an extensive overview on the subject the reader may refer to the survey article \cite{schlag_disp_survey}. A way to approach these estimates is through expansions of the resolvent around zero energy from which one can obtain an expansion of the evolution operator via Fourier transform. This is the strategy followed in \cite{jen_kato}, where the authors compute resolvent expansions in dimension three distinguishing the cases whether there are eigenvalues or resonance obstructions at zero. A unified approach  for resolvent expansions applicable to all dimensions was found later in \cite{jensen_nenciu}.

	In this brief note we recover properties of zero resonances and eigenfunctions described in the seminal paper \cite{jen_kato} under weaker assumptions on $V$ and with optimal behavior of such states as $|x|$ tends to infinity. On top of this, we obtain results of the same flavor in dimension four, covering all cases where zero resonances exist and showing that for $n\geq 5$ our solutions always belong to $L^2$. We do not exhibit any exotic or new behavior but rather give what we consider to be a fairly simple proof of classical results for a more general class of potentials (essentially the optimal one). Our result in dimension three also has the advantage of generalizing a characterization found by Beceanu in \cite{beceanu_2016} as we will see more in detail later on.
	
	
	In much of the literature, resonances are usually found to belong to weighted Sobolev or Lebesgue spaces where there is no scaling invariance. For example, in \cite{jen_kato} the authors define a resonance as a solution to $(-\Delta + V)\psi=0$ where the operator $-\Delta + V$ is meant to be extended to the weighted Sobolev space $H_{1, -s}$ with weight $\langle x \rangle^{-s}$ and a suitable $s>1/2$.  A similar definition can also be found in the leading paper on dispersive $L^1$ to $L^\infty$ estimates by Journé, Soffer and Sogge (\cite{journe_soffer_sogge}). Therein a zero resonant state is taken in a weighted $L^2$ space, which is the same definition used in \cite{yajima} in which the author studies $L^p$ to $L^q$ dispersive estimates. Similarly, in \cite{rodn_schlag},\cite{goldberg}, \cite{goldberg2} and \cite{goldberg_schlag_dim1_3} the authors define resonances as functions that belong to the intersection of weighted $L^2$ spaces given by $\cap_{s>1/2} L^{2,-s} $ with weight $\langle x \rangle^{-s}$. Even in more recent papers (\cite{aafarani} ,\cite{soffer_wu}) with much stricter assumptions on the potentials than the one considered herein, the framework used to define resonances is the one of weighted $L^2$ spaces.
	Along the same line, one can consider the definition in \cite{tataru}. For a generic time dependent second order elliptic operator a zero resonant state (Definition 1.8) is taken in a rather complicated local smoothing space that for the case of dimension three and higher is contained in the weighted $L^2$ space with weight $\langle x \rangle ^{-1}$. As an example for the non euclidean case, in \cite{wang} on a conical manifold we have a perturbation of a model operator both by a potential and a metric and resonances are defined as functions in weighted Sobolev spaces. In such work the author, as in \cite{jen_kato}, is mainly interested in the influence of zero resonant states on the singularities of the resolvent.

	The fact that we can consider a zero resonant state as a function in a weak Lebesgue space is a very useful feature and it comes at a fairly small cost. Indeed, in various applications (like \cite{jen_kato}, \cite{goldberg_schlag_dim1_3}, \cite{erdogan_schlag}, \cite{aafarani} or \cite{soffer_wu}) the potentials must have a prescribed decay at infinity, that is $|V(x)|\lesssim\langle x \rangle^{-\beta}$ for some $\beta>2$. Our assumption will be much less strict than requiring a specific pointwise decay and the results presented here only consider scaling invariant classes for the potentials. This can be a useful feature when working with nonlinear problems and we aim to further investigate properties on the bottom of the spectrum for Schr\"{o}dinger operators with potentials that satisfy only scaling invariant assumptions. 
	
	
	
	

	
	More precisely, let $L^{p,q}(\mathbb{R}^n)$ the Lorentz space defined on $\mathbb{R}^n$.  It is defined as the space of functions such that the quasinorm denoted by $\|\cdot\|_{p,q}$ is finite. We define the quasinorm via the distribution function $ d_f(t)= |\{|f(x)|>t\}|$ as
	\begin{align*}
		\|f\|_{p,q}:= p^{1/q}\left(\int_0^\infty t^{q-1} (d_f(t))^{q/p} dt \right)^{1/q} 
	\end{align*}
	for $q<\infty$ or 
	\begin{equation*}
		\| f\|_{p, \infty} := \sup_{t\geq 0} t d_f(t)^{1/p}
	\end{equation*}
	otherwise. As for the Lebesgue spaces, we identify two functions that are equal almost everywhere. (For some properties that we will use throughout the reader can refer to Appendix \ref{section: appendix lorentz spaces}).

	As mentioned before, for a prescribed decay the most general assumption is $V(x)$ decaying like $|x|^{-2-\varepsilon}$ at infinity (notice that $\langle x \rangle^{-2-\varepsilon}\in L^{n/2}(\mathbb{R}^n)$). From now on, we will take $V$ in a slightly smaller space than $L^{n/2}(\mathbb{R}^n)$, that is we will assume 
	\begin{equation}
		\label{def: V in L^3/2,1}
		V\in L^{n/2, 1}(\mathbb{R}^n)\subset L^{n/2}(\mathbb{R}^n).
	\end{equation}
	Assuming  \eqref{def: V in L^3/2,1} is a little more restrictive than taking the potential in a Lebesgue space or in a Kato class, like in \cite{bui_duong_hong} and \cite{beceanu_goldberg}. However, it is still less strict than what is assumed in \cite{goldberg}, \cite{goldberg2}, or \cite{goldberg_res_weak}, where $L^{3/2-\varepsilon}\cap L^{3/2+\varepsilon}\subset L^{3/2,1} $ from Proposition \ref{prop: f in intersection of L^p infty}, and much more general than imposing a power-like decay, which we have seen is the custom in several cases. Moreover, our assumption is satisfied from the case $V(x)= O(\langle x \rangle^{-2-\varepsilon})$, which is a common assumption in the low frequencies asymptotic. Nonetheless, it does not seem to have been clearly observed that a larger class, such as $L^{n/2, 1}(\mathbb{R}^n)$, gives the right framework to describe zero resonances and eigenfunctions more simply and optimally than in the usual approach \cite{jen_kato}. Indeed, the results presented in Theorems \ref{th: res in weak sp} and \ref{th: properties of 0 resonance} may already be known to some experts, but we couldn't find any reference in the literature.

	We will be interested in $\dot{H}^1(\mathbb{R}^n)$ solutions of the equation
	\begin{equation}
		\label{eq: equation of the resonance}
		(-\Delta +V)\psi=0
	\end{equation}
	with $V\in L^{n/2,1}(\mathbb{R}^n)$ and $\dot{H}^1(\mathbb{R}^n)$ the homogeneous Sobolev space (see \eqref{def: hom sobolev} for a definition). We will state our result for a function in  $\dot{H}^1(\mathbb{R}^n)$ which gives a simple and natural class of non necessarily $L^2$ functions where to look for solutions of \eqref{eq: equation of the resonance}. Moreover, $-\Delta$ is an isometry from $\dot{H}^1(\mathbb{R}^n)$ to $\dot{H}^{-1}(\mathbb{R}^n)$, a feature that we shall extensively use in Section \ref{section: green function for small W}.

	We also comment that by inverting $-\Delta$, equation \eqref{eq: equation of the resonance} is equivalent to solving
	\begin{equation*}
		\psi + (-\Delta)^{-1}V\psi=0.
	\end{equation*}
	In his paper \cite{beceanu_2016} Beceanu solves this equation in $L^\infty(\mathbb{R}^3)$. As we just said, here we will rather solve it in $\dot{H}^1(\mathbb{R}^n)$, or actually in $L^{\frac{2n}{n-2}, \infty}(\R^n)$, in particular without seeking a priori bounded solutions.

	The main results we will prove are the following. 
	
	\begin{theo}
		\label{th: res in weak sp}
		Let $n\geq 3$, $V \in L^{n/2,1}(\R^n)$ and $\psi \in \dot{H}^1(\mathbb{R}^n)$ a solution of the equation $(-\D + V)\psi =0$.
		\begin{itemize}
			\item [$i)$] If $n \geq 5$ then $\psi \in L^2$ and hence all solutions are eigenfunctions. 
			\item [$ii)$] If $n= 3,4$ then
			\begin{equation*}
				\lim_{|x| \to \infty} |x|^{n-2}\psi(x)= -c_n\int V(y) \psi(y)dy <\infty,
			\end{equation*}
			with $c_n= n(n-2)|B(0,1)|$. 
			Hence for $n=3$
			\begin{equation*}
				\psi \in L^{3, \infty}(\R^3)
			\end{equation*}
			and for $n=4$ 
			\begin{equation*}
				\psi \in L^{2, \infty}(\R^4).
			\end{equation*}
		\end{itemize}
	\end{theo}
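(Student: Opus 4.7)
The plan is to recast equation \eqref{eq: equation of the resonance} as the integral identity
\begin{equation*}
\psi=-G*(V\psi),\qquad G(x)=\frac{1}{c_n}\,|x|^{-(n-2)},
\end{equation*}
and to analyze the right-hand side through Lorentz-space Hölder and Young inequalities. The starting point is the Sobolev embedding $\dot H^1(\R^n)\hookrightarrow L^{2n/(n-2),2}(\R^n)$, which gives an initial control on $\psi$ that I will upgrade by a small-potential perturbative scheme. Concretely, I split $V=V_1+V_2$ with $\|V_1\|_{L^{n/2,1}}<\varepsilon$ for a small $\varepsilon>0$ and $V_2\in L^\infty$ compactly supported, using density of such functions in $L^{n/2,1}$. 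A handful of Lorentz Hölder and Young computations then show that the operator $T_1\phi:=G*(V_1\phi)$ is bounded with norm $\lesssim\varepsilon$ on each of $L^{2n/(n-2),2}$, $L^{n/(n-2),\infty}$, and $L^2$, so that $I+T_1$ is invertible on all three through a common Neumann-series representation.

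The equation rewrites as $(I+T_1)\psi=-G*(V_2\psi)$, which reduces matters to analysis of the right-hand side. Since $V_2$ is bounded and compactly supported and $\psi$ is locally $L^2$, one has $V_2\psi\in L^1\cap L^2$ with compact support. Young's inequality applied with $G\in L^{n/(n-2),\infty}$ then gives $G*(V_2\psi)\in L^{n/(n-2),\infty}$ for every $n\ge 3$; in dimensions $n\ge 5$ the decay $|x|^{-(n-2)}$ is square integrable at infinity because $2(n-2)>n$, and combined with the compact support of $V_2\psi$ this yields the stronger $G*(V_2\psi)\in L^2$. Inverting $I+T_1$ in the appropriate target space then proves part $i)$ and the membership assertions of part $ii)$.

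For the pointwise limit in part $ii)$, once $\psi\in L^{n/(n-2),\infty}$ is established the critical Hölder exponents $\tfrac{2}{n}+\tfrac{n-2}{n}=1$ yield $V\psi\in L^{1,1}=L^1(\R^n)$. The representation
\begin{equation*}
|x|^{n-2}\psi(x)=-\frac{1}{c_n}\int\frac{|x|^{n-2}}{|x-y|^{n-2}}\,V(y)\psi(y)\,dy
\end{equation*}
and dominated convergence give the asymptotic, splitting the integral over $\{|y|\le|x|/2\}$, where the ratio is uniformly bounded by $2^{n-2}$ and the majorant $2^{n-2}|V\psi|\in L^1$ applies, and its complement, where the absolute continuity of $\int V\psi$ forces the tail to vanish as $|x|\to\infty$. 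The identifications $\psi\in L^{3,\infty}(\R^3)$ and $\psi\in L^{2,\infty}(\R^4)$ come from $|x|^{-(n-2)}\in L^{n/(n-2),\infty}(\R^n)$. The main technical obstacle is coordinating the invertibility of $I+T_1$ across several Lorentz target spaces at once; I expect this to be handled by the uniform Neumann-series definition of the inverse together with the uniqueness of $\psi$ in the fixed-point equation, making the inverse act compatibly on each of the spaces involved.
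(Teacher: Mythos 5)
Your perturbative scheme (split $V=V_1+V_2$, invert $I+T_1$ by Neumann series on Lorentz spaces) is a legitimate variant of the paper's argument — the paper does the same splitting but builds a pointwise-bounded Green function $G(x,y)$ of $-\Delta+W$ instead of working purely at the level of operator norms — and your proof of part $i)$ and of the memberships $\psi\in L^{n/(n-2),\infty}$ is essentially sound (with the caveat that the $L^2$-boundedness of $T_1$ via H\"older and Hardy--Littlewood--Sobolev requires $n\ge 5$, which is all you use it for; and the compatibility of the Neumann inverses on the different spaces, which you flag, is handled exactly as you suggest, by injectivity of $I+T_1$ on the space where $\psi$ is already known to live).

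The genuine gap is in the limit computation of part $ii)$, on the outer region. You write
\begin{equation*}
|x|^{n-2}\psi(x)=-\frac{1}{c_n}\int\frac{|x|^{n-2}}{|x-y|^{n-2}}\,V(y)\psi(y)\,dy
\end{equation*}
and dispose of $\{|y|>|x|/2\}$ by ``absolute continuity of $\int V\psi$''. But that region contains the singularity $y=x$, where the ratio $|x|^{n-2}/|x-y|^{n-2}$ is unbounded, so the weight cannot be pulled out and the smallness of $\int_{|y|>|x|/2}|V\psi|$ alone does not make the contribution vanish. Knowing only $V\psi\in L^1$ (which is all that H\"older gives from $V\in L^{n/2,1}$, $\psi\in L^{n/(n-2),\infty}$) is not sufficient: $|x|^{n-2}\int_{|x-y|\le|x|/2}|x-y|^{-(n-2)}|V\psi|(y)\,dy$ need not tend to $0$ for a general $L^1$ density. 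The paper circumvents precisely this point by computing the limit from $\psi(x)=-\int G(x,y)K(y)\psi(y)\,dy$, where $K$ is compactly supported (so the limit variable $y$ never approaches $x$) and where the singular far-region term involves only $W$: there one pairs $|x-z|^{-(n-2)}\in L^{n/(n-2),\infty}$ against $W\mathbbm{1}_{B(0,|x|/2)^c}$, whose $L^{n/2,1}$ norm tends to $0$ (this is where the second Lorentz index $1$ is really used), while the factor $|x|^{n-2}$ is absorbed by the pointwise bound $|G(z,y)|\lesssim|z-y|^{-(n-2)}$ of Lemma \ref{lemma: bound integral gWg}/Theorem \ref{th: G green function of -Delta +W}. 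To repair your version you either need such a pointwise a priori bound $|\psi(y)|\lesssim|y|^{-(n-2)}$ at infinity before passing to the limit (obtainable first, as in the paper, from $|x|^{n-2}|\psi(x)|\lesssim\int|K\psi|$), or you must compute the limit against the compactly supported piece $K\psi$ and treat the $W$-tail separately as above. A minor point: your kernel constant $1/c_n$ does not match the constant $c_n$ appearing in the statement; the normalization should be reconciled with the paper's convention $G_0(x,y)=c_n|x-y|^{-(n-2)}$.
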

	\begin{rem}
		The conclusion in item $i)$ can also be seen as a consequence of the decay given by item $ii)$. However, we can give a direct proof of the fact $\psi \in L^2$ for $n\geq 5$, without the need of computing the explicit limit of $|x|^{n-2}\psi(x)$. Moreover, item $i)$ is true under the weaker assumption $V\in L^{n/2}$. 
	\end{rem}
	
	\begin{theo}
		\label{th: properties of 0 resonance}
		Let $n\geq 3$, $V \in L^{n/2,1}(\R^n)$ and $\psi \in \dot{H}^1(\mathbb{R}^n)$ a solution of the equation $(-\D + V)\psi =0$. The following properties hold: 
		\begin{itemize}
			\item [$i)$] If $\int V \psi =0$ then $\psi  =O(\frac{1}{|x|^{n-1}})$ near infinity and in particular $\psi $ is in $L^{2}(\R^n)$ and is a zero eigenfunction. Moreover, for $n=3,4$, $\psi \in L^{2}(\R^n)$ if and only if $\int V \psi =0$.
			\item [$ii)$] If $\int V \psi = \int y_k V\psi =0$ for all $k=1, \ldots, n$ then $\psi =O(\frac{1}{|x|^{n}})$ near infinity. In particular, $\psi $ is a zero eigenfunction and $\psi \in L^{1, \infty}(\R^n)$.
			\item [$iii)$] If $\int V \psi = \int y_k V\psi = \int y_k y_l V\psi =0$ for all $k,l=1, \ldots, n$ then $\psi = O( \frac{1}{|x|^{n+1}})$ near infinity. In particular, $\psi $ is a zero eigenfunction and $\psi \in L^{1}(\R^n)$.
			
		\end{itemize}
	\end{theo}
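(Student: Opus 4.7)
The plan is to work from the Green's function representation of $\psi$. Since $\psi\in\dot H^1(\R^n)$ satisfies $-\D\psi=-V\psi$ with $V\psi\in L^1$ (by Lorentz H\"older applied to $V\in L^{n/2,1}$ and $\psi\in L^{n/(n-2),\infty}$ from Theorem \ref{th: res in weak sp}), inverting the Laplacian yields
\begin{equation*}
\psi(x) = -\frac{1}{c_n}\int \frac{V(y)\psi(y)}{|x-y|^{n-2}}\,dy,
\end{equation*}
which is convolution with the Newton potential. The idea is to Taylor expand $|x-y|^{-(n-2)}$ in $y$ about the origin for $|x|$ large; each term of the expansion couples with a polynomial moment $\int y^{\alpha}V\psi\,dy$, and the successive orthogonality hypotheses cancel the moments up to order $0$, $1$, $2$, leaving a remainder that produces the claimed pointwise decay.

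For item (i), assuming $\int V\psi=0$, I would subtract the (now vanishing) constant term and write
\begin{equation*}
\psi(x) = -\frac{1}{c_n}\int\!\left(\frac{1}{|x-y|^{n-2}}-\frac{1}{|x|^{n-2}}\right)V(y)\psi(y)\,dy,
\end{equation*}
then split the integral at $|y|=|x|/2$. On the near region the mean value theorem gives bracket $\lesssim|y|/|x|^{n-1}$; on the far region I would dominate by $|x-y|^{-(n-2)}+|x|^{-(n-2)}$ and use that $\psi(y)=o(|y|^{-(n-2)})$, which Theorem \ref{th: res in weak sp} grants once the leading limit $\int V\psi$ vanishes. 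This produces $\psi(x)=O(|x|^{-(n-1)})$, whence $\psi\in L^2(\R^n)$. The converse in $n=3,4$ is immediate from Theorem \ref{th: res in weak sp}: if $\int V\psi\neq0$ then $\psi\sim|x|^{-(n-2)}$ at infinity, and this profile fails to be square integrable exactly when $n\leq 4$.

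Items (ii) and (iii) follow the same blueprint with a higher-order Taylor expansion
\begin{equation*}
\frac{1}{|x-y|^{n-2}} = \sum_{|\alpha|\leq k}\frac{(-y)^{\alpha}}{\alpha!}\partial^{\alpha}\!\left(\frac{1}{|x|^{n-2}}\right) + R_k(x,y),\qquad R_k(x,y)=O\!\left(\frac{|y|^{k+1}}{|x|^{n-1+k}}\right),
\end{equation*}
valid on $|y|\leq|x|/2$. The additional moment conditions annihilate the explicit terms in the sum, and the same near/far splitting yields $\psi(x)=O(|x|^{-n})$ for (ii) and $\psi(x)=O(|x|^{-(n+1)})$ for (iii), which give the stated $L^{1,\infty}$ and $L^1$ memberships.

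The hard part is the bootstrap that guarantees the polynomial moments in (ii)--(iii) are well-defined and that the near-region integrals $\int_{|y|\leq|x|/2}|y|^{k+1}|V\psi|\,dy$ grow slowly enough in $|x|$ to preserve the advertised decay rates; this requires feeding the decay established for one item into the Lorentz H\"older estimates in the tail of the next, and it is precisely what forces the Lorentz refinement $V\in L^{n/2,1}$ rather than merely $V\in L^{n/2}$.
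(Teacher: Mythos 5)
Your outline reproduces the paper's starting point (the Newton-potential representation, the Taylor expansion of $|x-y|^{-(n-2)}$, and the cancellation of moments, essentially Lemma \ref{lemma: algebraic ineq}), but the step you yourself flag as "the hard part" is not a technicality: it is a genuine obstruction, and the direct near/far bootstrap does not deliver the stated rates when $V$ is merely in $L^{n/2,1}$. Already at item (i): on the far region $|y|\geq|x|/2$ the only smallness available is $\|V\mathbbm{1}_{\{|y|\geq|x|/2\}}\|_{n/2,1}\to0$, which comes with no rate, so that region contributes only $o(|x|^{-(n-2)})$, not $O(|x|^{-(n-1)})$; and on the near region, with only $\psi=O(|y|^{-(n-2)})$ known, H\"older in Lorentz spaces on the ball $B(0,|x|/2)$ gives $\int_{|y|\leq|x|/2}|y|\,|V\psi|\,dy\lesssim|x|\,\|V\|_{n/2,1}$, which again destroys the gain. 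Thus the mean-value/Taylor remainder argument yields only the qualitative improvement $\psi=o(|x|^{-(n-2)})$, the bootstrap never gets off the ground, and items (ii)--(iii) cannot be reached. Your scheme would work if $V$ had pointwise decay $\langle x\rangle^{-2-\varepsilon}$ (the classical setting of Jensen--Kato), which is precisely the hypothesis this theorem is meant to dispense with; the moments in (ii)--(iii) being well defined is the harmless part of the bootstrap, the missing piece is the rate.

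The paper's proof of Proposition \ref{prop: decay psi} avoids estimating the tail altogether: it decomposes $V=W+K$ with $K$ a compactly supported simple function and $\|W\|_{n/2,1}\ll1$, and after the same moment subtraction writes $\psi=f+\mathcal{S}\psi$, where $f$ involves $V\psi$ only on the fixed ball $B(0,R)$ (so the Taylor remainder genuinely gives $f\in\mathcal{B}_{N+n-1}$, with $\mathcal{B}_\alpha$ as in \eqref{def: B_alpha}), while all the unbounded-region contributions are carried by the operator $\mathcal{S}$, whose kernel is weighted by $W$ alone. Using Lemma \ref{lemma: bound integral gWg} and the smallness of $\|W\|_{n/2,1}$, $\mathcal{S}$ is shown to be a contraction on $\mathcal{B}_{N+n-2}$ and on $\mathcal{B}_{N+n-1}$, so the problematic terms are absorbed rather than estimated; uniqueness of the fixed point in the larger space, where $\psi$ is already known to lie by Theorem \ref{th: res in weak sp}, then forces $\psi\in\mathcal{B}_{N+n-1}$. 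If you insert this decomposition-plus-contraction mechanism, your argument becomes the paper's; without it, the claimed decay $O(|x|^{-(n-1)})$, $O(|x|^{-n})$, $O(|x|^{-(n+1)})$ is not attainable under the hypothesis $V\in L^{n/2,1}$ alone.
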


	Let us give a few comments about these results: 
	\begin{itemize}
		\item as we mentioned earlier, $\dot{H}^1(\mathbb{R}^n)$ is a pretty natural class of non $L^2$ functions to consider for this problem and in higher dimension $n\geq 5$ we see from Theorem \ref{th: res in weak sp} that there are no solutions of \eqref{eq: equation of the resonance} which do not belong to $L^2$. This is due to the fact that Sobolev embeddings imply that the operator $I + (-\D)^{-1}V$ maps $L^2$ into itself.  Therefore, with respect to the existence of resonances $\dot{H}^1$ represents an optimal class of functions where to solve \eqref{eq: equation of the resonance}. However, from the point of view of regularity it is actually enough to require $\psi \in L^{\frac{2n}{n-2}, \infty}(\R^n)$ and indeed we will perform all proofs for $\psi $ in this larger class.
		\item In dimension three the simple assumption $V\in L^{3/2,1}(\R^3)$ generalizes the result stated by Beceanu in \cite{beceanu_2016} (Lemma 2.3) where the necessary and sufficient condition for $\psi$ to be an eigenfunction is recovered only for potentials in $L^{3/2,1}(\R^3)\cap L^1(\R^3)$. Regarding the behavior at infinity, we give a more precise statement than just $\psi \in \langle x \rangle^{-1} L^\infty(\R^3)$ since in Theorem \ref{th: res in weak sp} we give an explicit expression for the limit of $|x|\psi$ for $|x|\to +\infty$. 
		
		\item  The conditions of orthogonality between $V\psi$ and various other polynomial functions are not new ones. Indeed, they can also be found in \cite{jen_kato}, \cite{jensen_d4} or \cite{beceanu_2016}. In particular in \cite{jen_kato},\cite{jensen_d4} it was already observed that the condition $\int  V\psi=0$ is the right one to discriminate between zero resonance or eigenvalue. However, in these works the authors consider conditions of pointwise decay on the potential that are more strict than our assumption. We also remark that the orthogonality conditions in item $i)$ and $ii)$ are actually necessary and sufficient for the decay of $\psi$, see Remark \ref{rem: decay psi implies orthogon}. 
		
	\end{itemize}

	\begin{notation}
		We will drop from the notations the dependence on the underlying space $\R^n$ unless the situation requires it to make it explicit. 
		
		We define the function $a$
		\begin{equation*}
			a(x) = \frac{1}{|x|^{n-2}}
		\end{equation*}
		which belongs to $L^{\frac{n}{n-2}, \infty}$. 
	\end{notation}

	The main steps will be the following:
	\begin{enumerate}
		\item
		\label{item: V=W+K}
		We will use the density of simple functions in $L^{n/2,1}$ to decompose the potential $V$ in
		\begin{equation*}
			V=W+K
		\end{equation*}
		where $W\in L^{n/2,1}$ is such that $\|W\|_{n/2,1}$ is arbitrarily small and $K$ is a simple function, hence compactly supported and in any Lorentz space. 
		\item Thanks to the smallness of $W$ in $L^{n/2,1}$ we will be able to define and estimate $G$, the Green function of $(-\Delta +W)$. 
		\item We will then solve in $\dot{H}^1$ (or more in general $L^{\frac{2n}{n-2}, \infty}$ is sufficient) the equation
		\begin{equation*}
			(-\Delta + W) \psi = -K \psi
		\end{equation*}
		where $(-\Delta + W): \dot{H}^1 \rightarrow \dot{H}^{-1}$ is invertible and we will use the Green function computed in the previous step to write 
		\begin{equation*}
			\psi(x)= - \int G(x,y) K(y) \psi(y) dy.
		\end{equation*}
		In the rest of the paper $\psi$ will be a $L^{\frac{2n}{n-2}, \infty}$ solution of $(-\D + V)\psi =0$. 
		
		\item We will show that $|x|^{n-2}\psi$ has finite limit at infinity and that such limit is given by $- c_n\int  V\psi $. Therefore, the value of such integral determines whether $\psi$ is in $L^2$ or not. This is done in Section \ref{section: properties of zero resonance}. 
		
		\item With additional orthogonality conditions on $ V\psi$ we can repeat the argument of the previous step to prove that $\psi$ has faster, but limited, decay until we can reach the $L^1$ space. This is done in Section \ref{section: properties of zero resonance}. 
	\end{enumerate}

	\section{Green function for a small potential}
	\label{section: green function for small W}
	
	Let $\dot{H}^s$ for $s \in \R$ the homogeneous Sobolev space of order $s$, defined as 
	\begin{equation}
		\label{def: hom sobolev}
		\Dot{H}^s:= \{ u \in \mathcal{S}'\ :\ \hat{u} \in L^1_{loc}, \ \|u\|_{\Dot{H}^s}:=\||\cdot|^s\hat{u}\|_{L^2}<\infty\}
	\end{equation}
	where $\mathcal{S}'$ is the space of tempered distributions on $\R^n$. The following Sobolev embedding will be useful
	\begin{equation}
		\label{eq: dot H^1 inclusion}
		\Dot{H}^1\hookrightarrow L^{\frac{2n}{n-2}},
	\end{equation}
	see Theorem 1.38 in \cite{bahouri} for a proof. 
	\begin{rem}
		We remark that the more general embedding 
		\begin{equation}
			\label{eq: dot H^1 in lorentz}
			\Dot{H}^1\hookrightarrow L^{\frac{2n}{n-2}, 2},
		\end{equation}
		is also true. It can be obtained with the same reasoning as in the proof of Theorem 1.38 in \cite{bahouri} and using Young's inequality for Lorentz spaces in dimension $n\geq 3$ (\cite{blozinski} Theorem 2.12):
		\begin{equation*}
			\||\cdot|^{-\alpha} * f\|_{p,q} \leq C \|f\|_{r,q} \quad 1 + \frac{1}{p} = \frac{1}{r} + \frac{\alpha}{n },
		\end{equation*}
		where $|\cdot|^{-\alpha}  \in L^{n /\alpha, \infty}$.

	\end{rem}

	We consider the operator 
	\begin{equation*}
		-\D + V : \dot{H}^1 \to \dot{H}^{-1},
	\end{equation*}
	its	selfadjoint realization on $\dot{H}^1$ is obtained via the quadratic form 
	\begin{equation*}
		q(u,u )= \langle \nabla u, \nabla u\rangle + \langle V u , u \rangle =  \int |\nabla u |^2 + V|u|^2 dx, 
	\end{equation*}
	which is well defined on $\dot{H}^1$. Indeed, given $V \in L^{n/2, 1} \subset L^{n/2}  $ and $ u \in 
	\dot{H}^1\hookrightarrow L^{\frac{2n}{n-2}}$ we have 	
	\begin{equation}
		\label{comp: W maps to H^-1}
		Vu \in L^{n/2} \cdot L^ {\frac{2n}{n-2}}\subset L^{\frac{2n}{n+2}}\subset \Dot{H}^{-1},
	\end{equation}
	thanks to the dual of inclusion \eqref{eq: dot H^1 inclusion}. We have just found that $V$ maps $ \dot{H}^1$ to $ \dot{H}^{-1}$, hence the scalar product $\langle V u , u \rangle$ is well defined when $ u \in \dot{H}^1$. From the continuous embeddings we also have
	\begin{equation*}
		\|V\|_{\Dot{H}^1\rightarrow \Dot{H}^{-1}}\leq \|V\|_{ n/2,1}.
	\end{equation*} 
	We recall the following property. 
	
	\begin{lemma}
		Let $V \in L^{n/2, 1}$ and $\delta \ll1$ then there exist $K, W \in L^{n/2, 1}$ such that $K$ is a simple function and $\|W\|_{n /2, 1}\leq \delta$. 
	\end{lemma}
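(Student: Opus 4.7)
The plan is to prove this by the density of simple functions in the Lorentz space $L^{n/2,1}(\R^n)$, setting $W := V-K$ once a sufficiently good simple approximation $K$ has been produced. The argument splits naturally into a truncation step and a discretization step.

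First I would reduce to the case of a bounded, compactly supported potential. Define the truncations $V_{M,R} := V \, \mathbbm{1}_{\{|V|\leq M\} \cap \{|x|\leq R\}}$ and note that $V - V_{M,R} \to 0$ pointwise almost everywhere and is dominated by $|V|$. Using the rearrangement representation
\begin{equation*}
\|f\|_{n/2,1} = \tfrac{n}{2}\int_0^\infty f^*(t)\, t^{2/n - 1}\, dt,
\end{equation*}
together with the pointwise bound $(V-V_{M,R})^* \leq V^*$ and the dominated convergence theorem applied to the measure $t^{2/n-1}\,dt$, one obtains $\|V - V_{M,R}\|_{n/2,1} \to 0$ as $M, R \to \infty$. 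Hence, given $\delta$, I can fix $M$ and $R$ so that $\|V - V_{M,R}\|_{n/2,1} \leq \delta/2$.

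Next I would discretize $V_{M,R}$. Partition $[-M,M]$ (or, in the complex case, a corresponding square) into intervals of length $\varepsilon$ and set $K = \sum_k c_k \mathbbm{1}_{E_k}$ with $E_k = V_{M,R}^{-1}([c_k, c_k + \varepsilon))$. This $K$ is a simple function, $\|V_{M,R} - K\|_\infty \leq \varepsilon$, and $V_{M,R} - K$ is supported in $B(0,R)$. Since functions supported in a set of finite measure obey the embedding $L^\infty \hookrightarrow L^{n/2,1}$ with constant depending only on the measure of the support, one has
\begin{equation*}
\|V_{M,R} - K\|_{n/2,1} \leq C_n\, \varepsilon\, |B(0,R)|^{2/n},
\end{equation*}
so choosing $\varepsilon$ small enough ensures this is at most $\delta/2$.

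Combining the two estimates via the triangle inequality yields $\|V - K\|_{n/2,1} \leq \delta$, and defining $W := V - K$ completes the proof. The only mildly delicate point is verifying the dominated convergence argument in the Lorentz setting; this is routine once one uses the integral representation of the $L^{n/2,1}$ norm together with the standard monotonicity properties of the decreasing rearrangement recalled in the appendix.
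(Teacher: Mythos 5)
Your argument is correct and amounts to essentially the same approach as the paper: the paper's proof is a one-line appeal to the density of simple functions in $L^{n/2,1}$ (Grafakos, Theorem 1.4.13) with $W:=V-K$, and what you do is reprove that density via truncation plus discretization. Two cosmetic points to tidy up: arrange the partition of the range so that the cell containing $0$ gets coefficient $0$ (otherwise $K$ would charge the infinite-measure set where $V_{M,R}=0$, and would neither be a simple function in the paper's sense nor leave $V_{M,R}-K$ supported in $B(0,R)$), and since $\|\cdot\|_{n/2,1}$ is a priori only a quasinorm, take each of the two pieces $\leq \delta/(2C)$ with $C$ the quasi-triangle constant rather than $\delta/2$.
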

	We recall that simple functions are finite linear combinations of characteristic functions of sets of finite measure.
	\begin{proof}
		The statement follows directly from the density of simple functions in $L^{n/2, 1}$, see Theorem 1.4.13 in \cite{grafakos}. 
	\end{proof}

	From the previous lemma, we can decompose the potential $V$ as 
	\begin{equation*}
		V = W+K \ \quad \|W \|_{n/2, 1} \ll 1. 
	\end{equation*}
	Instead of looking for the solution of $(-\Delta +V)\psi=0$, in this section we will study $\psi \in \dot{H}^1$ as solution of the equation 
	\begin{equation*}
		(-\Delta + W) \psi = -K \psi.
	\end{equation*}
	We now consider the operator 
	\begin{equation*}
		(-\Delta + W) : \dot{H}^1 \to\dot{H}^{-1}
	\end{equation*} 
	where $-\D: \dot{H}^1 \to\dot{H}^{-1}$ is an isometry and 
	
	\begin{equation*}
		\|W\|_{\Dot{H}^1\rightarrow \Dot{H}^{-1}}\leq \|W\|_{ n/2,1} \ll 1. 
	\end{equation*}

	\begin{rem}
		\label{rem: psi in L^6,infty 1}

		The operator $-\D + W$ maps $\dot{H}^1$ into $\Dot{H}^{-1}$ even under the weaker assumption $W \in L^{n/2, 2}$, thanks to the embedding \eqref{eq: dot H^1 in lorentz}. However, the space $L^{n/2, 1}$ has the interest of being the dual of $L^{\frac{n}{n-2},\infty}$, this will allow us to integrate $W$ against the kernel of $(-\D)^{-1}$ and to state that $\int V \psi$, limit of $|x|^{n-2}\psi$ at infinity, is finite. 
		
	\end{rem}

	The smallness of $W$ allows us to invert the operator via a Neumann series. Indeed, writing 
	\begin{equation*}
		(-\Delta + W)^{-1} =(-\Delta)^{-1}( I +W(-\Delta)^{-1})^{-1}
	\end{equation*}
	we obtain a small perturbation of the identity and hence we can write, at least formally, 
	\begin{equation}
		\label{eq: formal series resolvent}
		(-\Delta + W)^{-1} =\sum_{j\geq 0} (-1)^j (-\Delta)^{-1} (W(-\Delta)^{-1})^j
	\end{equation}
	where the series is convergent thanks to the smallness of $W$.

	We then apply an idea from \cite{pinchover}: to construct the Green function we define the integral kernels corresponding to the operators in the series \eqref{eq: formal series resolvent}. 
	We do so by recurrence, setting
	\begin{align*}
		G_0(x,y)= c_n\frac{1}{ |x-y|^{n-2}},\ G_j(x,y)= c_n\int \frac{1}{|x-z|^{n-2}} W(z) G_{j-1}(z,y) dz
	\end{align*}
	where $c_n= n(n-2)|B(0,1)|$ is a constant depending on the dimension. 
	Here $G_0$ is the kernel of $(-\Delta)^{-1}$, $G_1$ that is given by 
	\begin{equation*}
		G_1(x,y)= c_n \int \frac{1}{|x-z|^{n-2}} W(z)c_n \frac{1}{|z-y|^{n-2}} dz
	\end{equation*}
	is the kernel of $(-\Delta)^{-1}W(-\Delta)^{-1}$ and so on, $G_j$ will be the kernel of $(-\Delta)^{-1}(W(-\Delta)^{-1})^j$. 
	
	To bound the integrals defining $G_j$ we first remark a useful inequality. 
	\begin{lemma}
		\label{lemma: bound integral gWg}
		Let $a(x)= \frac{1}{|x|^{n-2}}$. For $x\neq y$ it holds
		\begin{equation*}
			\int \frac{1}{|x-z|^{n-2}} |W(z)| \frac{1}{ |z-y|^{n-2}} dz\leq\|a\|_{\frac{n}{n-2}, \infty}\|W\|_{n/2,1} \frac{2^{n-1}}{|x-y|^{n-2} }.
		\end{equation*}
	\end{lemma}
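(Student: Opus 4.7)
The plan is to exploit the triangle inequality $|x-y|\le |x-z|+|z-y|$ to split the domain of integration into two (overlapping) pieces, and then apply Hölder's inequality for Lorentz spaces using the duality $(L^{n/(n-2),\infty})^{*}=L^{n/2,1}$ (which is precisely the reason $L^{n/2,1}$ is the natural space for $W$, as pointed out in Remark \ref{rem: psi in L^6,infty 1}).

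First, I would observe that for every $z\in\R^n$ we must have either $|x-z|\ge |x-y|/2$ or $|z-y|\ge |x-y|/2$, since otherwise the triangle inequality is violated. Hence
\begin{equation*}
\R^n \;=\; A_1 \cup A_2, \qquad A_1:=\{|x-z|\ge |x-y|/2\},\quad A_2:=\{|z-y|\ge |x-y|/2\},
\end{equation*}
and the integral in the lemma is bounded by the sum of the integrals over $A_1$ and $A_2$. On $A_1$ we pull out the factor $|x-z|^{-(n-2)}\le 2^{n-2}|x-y|^{-(n-2)}$, leaving behind $\int |W(z)|\,|z-y|^{-(n-2)}\,dz$; symmetrically on $A_2$ we pull out $|z-y|^{-(n-2)}\le 2^{n-2}|x-y|^{-(n-2)}$, leaving $\int |W(z)|\,|x-z|^{-(n-2)}\,dz$.

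Next, each of the two remaining integrals is of the form $\int |W(z)|\,a(z-w)\,dz$ with $w\in\{x,y\}$. By Hölder's inequality in Lorentz spaces (see the appendix on Lorentz spaces referenced in the paper),
\begin{equation*}
\int |W(z)|\,a(z-w)\,dz \;\le\; \|W\|_{n/2,1}\,\|a(\cdot-w)\|_{n/(n-2),\infty} \;=\; \|W\|_{n/2,1}\,\|a\|_{n/(n-2),\infty},
\end{equation*}
using translation invariance of the quasinorm. Adding the two contributions gives the factor $2\cdot 2^{n-2}=2^{n-1}$, which is exactly the claimed constant.

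The only mild subtlety is making sure the Hölder pairing in Lorentz spaces is correctly invoked — one has to check that the exponents $(n/2,1)$ and $(n/(n-2),\infty)$ are Hölder-conjugate (which they are, since $\tfrac{2}{n}+\tfrac{n-2}{n}=1$ and $1+\tfrac{1}{\infty}=1$), and this is the whole motivation behind assuming $V\in L^{n/2,1}$ rather than merely $L^{n/2}$. Once this is clear, the rest is routine estimation; there is no real obstacle.
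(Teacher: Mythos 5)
Your proof is correct and follows essentially the same route as the paper: split the domain using the observation that $|x-z|$ or $|z-y|$ must be at least $|x-y|/2$, pull out the corresponding factor, and conclude by the $L^{n/2,1}$--$L^{n/(n-2),\infty}$ H\"older pairing with translation invariance, yielding the same constant $2^{n-1}$. The only (immaterial) difference is that you use the overlapping cover $A_1\cup A_2$ while the paper partitions into $\{|z-y|\le |x-y|/2\}$ and its complement.
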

	
	\begin{proof}
		We split the integral
		into the regions $\{|z-y|\leq \frac{|x-y|}{2}\}$ and $\{|z-y|> \frac{|x-y|}{2}\}$ so that $z\in B(y, \frac{|x-y|}{2})$ implies $|x-z|\geq \frac{|x-y|}{2}$ and we have 
		\begin{align*}
			\int \frac{1}{|x-z|^{n-2}} |W(z)| \frac{1}{ |z-y|^{n-2}} dz \leq& \frac{2^{n-2}}{|x-y|^{n-2}} \int |W(z)| \frac{1}{ |z-y|^{n-2}} dz\\
			&+ \frac{2^{n-2}}{|x-y|^{n-2}}\int \frac{1}{|x-z|^{n-2}} |W(z)| dz\\
			\leq & \frac{2^{n-2}}{|x-y|^{n-2}} \|W\|_{n/2,1} (\| |\cdot -y|^{-(n-2)}\|_{\frac{n}{n-2}, \infty} \\
			&\hspace{3.7cm}+ |\cdot -x|^{-(n-2)} \|_{\frac{n}{n-2}, \infty} )\\
			\leq & \|a\|_{\frac{n}{n-2}, \infty}\|W\|_{n/2,1} \frac{2^{n-1}}{|x-y|^{n-2} }.
		\end{align*}
		The last inequality follows from the fact that $L^{p, q}$ quasinorm are invariant by translation. 
	\end{proof}

\begin{rem}
	In the previous lemma we stated some useful properties for the kernel of $(-\D)^{-1}$ in $\R^n, n\geq 3$ when integrated against a function in $L^{n/2,1}$.	The two dimensional case, with its logarithmic behaviour in the kernel of $(-\D)^{-1}$ needs a dedicated approach.
\end{rem}
	
	\begin{theo}
		\label{th: G green function of -Delta +W}
		Let $W\in L^{n/2, 1}$ with $\|W\|_{n/2, 1}\ll1$ sufficiently small, then 
		\begin{equation}
			\label{eq: series definition G}
			G(x,y):= \sum_{j\geq 0} (-1)^j G_j(x,y)
		\end{equation}
		is the Green function of $-\Delta +W$ and is such that $|G(x,y)|\lesssim \frac{1}{|x-y|^{n-2}}$.
	\end{theo}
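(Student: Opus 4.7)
The plan is to establish the pointwise bound $|G_j(x,y)| \lesssim M^j/|x-y|^{n-2}$ by induction on $j$, sum the resulting geometric series, and then verify that the limit object $G$ indeed inverts $-\Delta + W$.

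First I would prove by induction the estimate
\begin{equation*}
    |G_j(x,y)| \leq c_n \, M^j \, \frac{1}{|x-y|^{n-2}}, \qquad M := c_n \, 2^{n-1} \, \|a\|_{\frac{n}{n-2},\infty} \|W\|_{n/2,1}.
\end{equation*}
The base case $j=0$ is the definition of $G_0$. For the inductive step, plugging the bound for $G_{j-1}$ into the recursive formula for $G_j$ produces exactly the double convolution against $|W|$ that is controlled by Lemma \ref{lemma: bound integral gWg}, and the factors line up so that the geometric structure $M^j$ propagates cleanly. Since $\|W\|_{n/2,1}$ is assumed arbitrarily small, we can guarantee $M<1$, so summing the absolutely convergent geometric series yields
\begin{equation*}
    |G(x,y)| \leq \sum_{j\geq 0} |G_j(x,y)| \leq \frac{c_n}{1-M} \, \frac{1}{|x-y|^{n-2}},
\end{equation*}
which gives the desired pointwise bound and also shows that the defining series converges absolutely for every $x\neq y$.

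Next I would show that $G$ is the Green function of $-\Delta + W$, namely that for any $f$ in a suitable test class (say Schwartz, or more appropriately $f\in \dot H^{-1}\cap$ a dense subspace) the function $u(x)=\int G(x,y)f(y)\,dy$ solves $(-\Delta+W)u = f$ in $\dot H^{-1}$. By the definition of the kernels $G_j$ as iterated convolutions, one has $\int G_j(x,y)f(y)\,dy = \bigl[(-\Delta)^{-1}(W(-\Delta)^{-1})^j f\bigr](x)$; the pointwise bound just established, together with the operator estimate $\|W(-\Delta)^{-1}\|_{\dot H^{-1}\to \dot H^{-1}}\leq \|W\|_{n/2,1}\ll 1$ from \eqref{comp: W maps to H^-1} and the isometry $-\Delta:\dot H^1\to\dot H^{-1}$, allows one to interchange the sum with $-\Delta$ and with multiplication by $W$. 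Applying $(-\Delta+W)$ term by term and using $(-\Delta) G_j = W G_{j-1}$ for $j\geq 1$ (together with $(-\Delta)G_0 = \delta$) produces a telescoping cancellation that leaves only $\delta_y$, so that $G$ is indeed the integral kernel of $(-\Delta+W)^{-1}$.

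The main obstacle I anticipate is not the induction itself (which is routine given Lemma \ref{lemma: bound integral gWg}) but the justification of the telescoping identity $(-\Delta+W)G=\delta_y$, since a priori the series defining $G$ only converges pointwise. The cleanest way around this is to argue at the level of operators rather than kernels: the Neumann series \eqref{eq: formal series resolvent} converges in the operator norm $\dot H^{-1}\to \dot H^1$ because $\|W(-\Delta)^{-1}\|_{\dot H^{-1}\to\dot H^{-1}}\ll 1$, so its sum is a bounded inverse of $-\Delta+W:\dot H^1\to \dot H^{-1}$; the pointwise bound on $G$ then identifies this abstract inverse with integration against $G$ by a standard density argument on Schwartz test data, after which the pointwise bound $|G(x,y)|\lesssim |x-y|^{-(n-2)}$ follows directly from the partial sum estimates.
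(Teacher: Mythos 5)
Your proposal is correct and follows essentially the same route as the paper: the same induction via Lemma \ref{lemma: bound integral gWg} giving $|G_j|\leq C^j G_0$ and summation of the geometric series, and the same identification of $G$ with the kernel of $(-\Delta+W)^{-1}$ by combining operator-norm convergence of the Neumann series \eqref{eq: formal series resolvent} in $\mathcal{B}(\dot H^{-1},\dot H^{1})$ with the absolute convergence of the kernel series when tested against compactly supported test functions. Your instinct to avoid the term-by-term telescoping and argue at the operator level is exactly what the paper does.
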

	\begin{rem}
		The theorem gives us a pointwise bound on the integral kernel of $(-\Delta +W)^{-1}$ by the integral kernel of $(-\D)^{-1}$. We deduce that $(-\Delta +W)^{-1}$ inherits any $L^p\to L^q$ or $L^{p,q}\to L^{p',q'}$ bound that $(-\D)^{-1}$ enjoys. 
	\end{rem}

	\begin{proof}[Proof of Theorem \ref{th: G green function of -Delta +W}]
		
		By Lemma \eqref{lemma: bound integral gWg} it is straightforward to bound $G_1$ by
		\begin{align*}
			|G_1(x,y)| \leq c_n^2 \|a\|_{\frac{n}{n-2}, \infty}\|W\|_{n/2,1} \frac{2^{n-1}}{|x-y|^{n-2} }=2^{n-1}c_n\|a\|_{\frac{n}{n-2}, \infty}\|W\|_{n/2,1} G_0(x,y)
		\end{align*}
		and setting $C=2^{n-1}c_n\|a\|_{\frac{n}{n-2}, \infty}\|W\|_{n/2,1}$ we obtain by induction 
		\begin{equation*}
			|G_j(x,y)|\leq C^j G_0(x,y).
		\end{equation*}
		Indeed, assuming $|G_{j-1}(x,y)|\leq C^{j-1} G_0(x,y)$ and applying again Lemma \ref{lemma: bound integral gWg} we directly obtain 
		\begin{align*}
			|G_j(x,y)|\leq & c_n^2 \int \frac{1}{|x-z|^{n-2}} |W(z)| \frac{C^{j-1}}{|z-y|^{n-2}} dz \\
			\leq &C^{j-1} c_n^2  \|a\|_{\frac{n}{n-2}, \infty}\|W\|_{n/2,1} \frac{2^{n-1}}{ |x-y|^{n-2}}= C^j G_0(x,y). 
		\end{align*}
		
		The constant $C$ is less than one thanks to the smallness of $W$ and hence the series \eqref{eq: series definition G} is convergent. The bound on $G$ follows directly from the one on $G_j$. 
		
		Finally, we check that $G$ is indeed the kernel of $(-\Delta + W)^{-1}$. Let  $\varphi, \psi\in C^{\infty}_0$ two test functions and $\langle\ ,\ \rangle$ the scalar product of $L^2$ 
		\begin{align}
			\langle \psi, (-\Delta + W)^{-1} \varphi \rangle =& \langle \psi, \sum_{j\geq 0} (-1)^j (-\Delta)^{-1} (W(-\Delta)^{-1})^j \varphi\rangle \nonumber\\
			=& \sum_{j\geq 0}(-1)^j \langle \psi, (-\Delta)^{-1} (W(-\Delta)^{-1})^j \varphi \rangle \label{use_conv_series_on_sobolev}\\
			=& \sum_{j\geq 0 }(-1)^j  \int \psi(x) G_j(x,y) \varphi(y)\ dxdy \nonumber\\
			=& \int \psi(x) \sum_{j\geq 0 }(-1)^j G_j(x,y) \varphi(y)\ dxdy \label{use_series_is_abs_conv}\\
			=& \int \psi(x) G(x,y) \varphi(y)\ dxdy \nonumber
		\end{align}
		where to obtain \eqref{use_conv_series_on_sobolev} we used the fact that the series \eqref{eq: formal series resolvent} is convergent with respect to the topology of $\mathcal{B}(\Dot{H}^{-1}, \Dot{H}^{1})$ and for \eqref{use_series_is_abs_conv} we used the absolute convergence of the series \eqref{eq: series definition G}. 
		
	\end{proof}
	
	\section{Properties of a zero resonant state}
	\label{section: properties of zero resonance}
	
	The aim of this section is to prove Theorems \ref{th: res in weak sp} and \ref{th: properties of 0 resonance}. Using the decomposition $ V=W+K$ we can write the resonance $\psi$ as solution of $(-\Delta + W) \psi= -K\psi$ and by the Green function defined in Theorem \ref{th: G green function of -Delta +W} this solution is given by
	\begin{equation}
		\label{eq: psi is int of GK}
		\psi(x)= - \int G(x,y) K(y) \psi(y) dy.
	\end{equation}
	
	First of all we prove Theorem \ref{th: res in weak sp}, therefore obtaining that a resonant state $\psi$  is actually an eigenfunction, when $n\geq 5$, or is in a weak Lebesgue space, when $n=3,4$.

	\begin{proof}[\textbf{Proof of Theorem \ref{th: res in weak sp}}]

		$\bullet$ \textbf{Case $n \geq 5$.}
		We consider $\psi \in \dot{H}^1$ solution of the eigenvalue equation $(-\D + V)\psi =0$, or equivalently $(-\Delta + W) \psi= -K\psi$. We want to prove that $\psi$ is in $L^2$. We know that $\psi $ is a solution of the equation 
		\begin{equation}
			\label{eq: psi with invertible op}
			(I +(-\Delta)^{-1} W) \psi= -(-\Delta)^{-1}K\psi. 
		\end{equation}
		Thanks to Hardy-Littlewood-Sobolev and H\"older inequalities we have 
		\begin{equation*}
			(-\D)^{-1}:L^{\frac{2n}{n+4},2} \to L^2,\ \quad W : L^2 \to L^{\frac{2n}{n+4},2} 
		\end{equation*}
		where $\|W\|_{L^2 \to L^{\frac{2n}{n+4},2} }  \lesssim \|W\|_{n/2, 1}\ll 1$, hence the operator $(I +(-\Delta)^{-1} W) $ can be inverted via a Neumann series and its inverse defines an operator from $L^2$ to $L^2$. Applying this operator to \eqref{eq: psi with invertible op} we find 
		\begin{equation*}
			\psi = -(I + (-\D)^{-1}W)^{-1} (-\D)^{-1}K \psi
		\end{equation*}
		with 
		\begin{equation*}
			(I + (-\D)^{-1}W)^{-1} : L^2 \to L^2.
		\end{equation*}
		So $\psi $ is in $L^2$ if $(-\D)^{-1}K \psi$ is. Since $ n \geq 5$ we have the Sobolev embedding
		\begin{equation*}
			\Dot{H}^{2} \hookrightarrow L^{\frac{2n}{n-4},2} 
		\end{equation*}
		and its dual
		\begin{equation*}
			L^{\frac{2n}{n+4},2} \hookrightarrow \dot{H}^{-2}.
		\end{equation*}
		Now, $K$ is a simple function, therefore it belongs to the space $L^{n/3, \infty}$ and by H\"older inequality
		\begin{align*}
			\|(-\D)^{-1}K \psi\|_2 = \|K \psi\|_{\dot{H}^{-2}} \lesssim\|K \psi \|_{\frac{2n}{n+4},2} \lesssim \|K\|_{n/3, \infty} \|\psi \|_{\frac{2n}{n-2},2} < \infty
		\end{align*}
		thanks to the inclusion $	\Dot{H}^{1} \hookrightarrow L^{\frac{2n}{n-2},2}$. We have obtained $(-\D)^{-1}K \psi \in L^2$
		and hence $\psi \in L^2$.

		$\bullet$ \textbf{Case $n = 3,4$.} We recall from Theorem \ref{th: G green function of -Delta +W} that $|G(x,y)|\lesssim \frac{1}{|x-y|^{n-2}}$, then using \eqref{eq: psi is int of GK} for large enough $|x|$
		\begin{equation*}
			|x|^{n-2}| \psi(x)| \lesssim |x|^{n-2} \int \frac{1}{|x-y|^{n-2}}| K(y) \psi(y)| dy \lesssim \int| K(y) \psi(y)| dy <\infty.
		\end{equation*}
		where the last integral is finite. Indeed, $K$ is a simple function, so  $K   \in L^{p,q}$ for any $p$ and $q$. In particular $K \in L^{\frac{2n}{n+ 2}, 1} = (L^{\frac{2n}{n-2}, \infty})^*$ and $\psi \in L^{\frac{2n}{n-2}, \infty}$.

		We can then obtain $\psi\in L^{\frac{n}{n-2},\infty}$. Let $\chi$ be a smooth cutoff which is equal to 1 on a large enough compact set, for the compact part it holds $\psi\chi \in L^{\frac{n}{n-2},\infty}$ since $\chi, \psi  \in L^{\frac{2n}{n-2},\infty}$. For the part at infinity we can bound $\psi(1- \chi)$ by $\frac{1}{|x|^{n-2}}\in L^{\frac{n}{n-2},\infty}$. So applying \eqref{eq: triangular ineq lorentz sp} we conclude 
		\begin{equation*}
			\|\psi\|_{\frac{n}{n-2}, \infty} \lesssim\|\chi \psi\|_{\frac{n}{n-2}, \infty} + \|(1-\chi) \psi\|_{\frac{n}{n-2}, \infty}<\infty .
		\end{equation*}

		Now to determine the value of the limit we need to study the behavior of $|x|^{n-2}G(x,y)$ for large $|x|$ and $y$ that ranges in a compact set (the support of $K$). Using the second resolvent identity 
		\begin{equation}
			\label{comp: resolvent id 2}
			(-\Delta + W)^{-1}= (-\Delta)^{-1} - (-\Delta)^{-1}W(-\Delta + W)^{-1}
		\end{equation}
		we can write 
		\begin{equation}
			\label{eq: |x|G(x,y)}
			|x|^{n-2}G(x,y)= c_n\frac{|x|^{n-2}}{ |x-y|^{n-2}} - c_n\int \frac{|x|^{n-2}}{4\pi |x-z|^{n-2}} W(z) G(z,y) dz .
		\end{equation}
		The first term in $|x|^{n-2}G(x,y)$ converges to $c_n$, for the second term we split the integral in the regions $B(0, |x|/2)$ and its complementary. First, taking $|x|$ large enough if $z\in B(0, |x|/2)^c$ then we will have 
		\begin{align*}
			|z-y|\geq ||z|-|y|| = |z|-|y| \geq \frac{|x|}{2} - |y| >0.
		\end{align*}
		Using this bound we obtain 
		\begin{align}
			\int_{B(0, |x|/2)^c} \frac{|x|^{n-2}}{4\pi |x-z|^{n-2}}| W(z) G(z,y)| dz\lesssim & \int_{B(0, |x|/2)^c} \frac{1}{ |x-z|^{n-2}} |W(z)| \frac{|x|^{n-2}}{(\frac{|x|}{2} - |y|)^{n-2} } dz \nonumber\\
			\lesssim &\int_{B(0, |x|/2)^c} \frac{1}{ |x-z|^{n-2}} |W(z)| dz\nonumber\\
			\lesssim & \|a\|_{\frac{n}{n-2}, \infty} \|W \mathbbm{1}_{B(0, |x|/2)^c}\|_{n/2,1} \xrightarrow[]{|x|\to +\infty} 0 \label{comp: limit |x|G outside ball}.
		\end{align}
		The norm $\|W \mathbbm{1}_{B(0, |x|/2)^c}\|_{n/2,1}$ is defined by an integral of the distribution function. Then the convergence to zero is due to the fact that the superlevel of $W \mathbbm{1}_{B(0, |x|/2)^c}$ tends to the empty set as $|x|\to +\infty$ and we can pass the limit in the integral thanks to the domination 
		\begin{equation*}
			d_{W\mathbbm{1}_{B(0, |x|/2)^c}}(t)^{2/n} \leq d_W(t)^{2/n} \in L^1(\mathbb{R}^+) .
		\end{equation*}
		On the other hand, for $|x|\to +\infty$ we have the pointwise convergence of 
		\begin{equation*}
			\mathbbm{1}_{B(0, |x|/2)}(z)c_n\frac{|x|^{n-2}}{|x-z|^{n-2}} W(z) G(z,y) \to c_n W(z) G(z,y)
		\end{equation*}
		and since the points $z \in B(0, \frac{|x|}{2})$ satisfy $|x-z|> \frac{|x|}{2}$ we have the domination 
		\begin{equation*}
			\mathbbm{1}_{B(0, |x|/2)}(z)\frac{|x|^{n-2}}{4\pi |x-z|^{n-2}} |W(z) G(z,y)| \lesssim |W(z)| \frac{1}{|z-y|^{n-2}} \in L^1.
		\end{equation*}
		So again by dominated convergence 
		\begin{equation}
			c_n \int_{B(0, |x|/2)} \frac{|x|^{n-2}}{4\pi |x-z|^{n-2}} W(z) G(z,y) \xrightarrow[]{|x|\to +\infty}\ c_n\int  W(z) G(z,y) dz . \label{comp: limit |x|G inside ball}
		\end{equation}
		Summing together \eqref{comp: limit |x|G outside ball} and \eqref{comp: limit |x|G inside ball} in \eqref{eq: |x|G(x,y)} we obtain 
		\begin{equation*}
			\lim_{|x|\to +\infty}|x|^{n-2}G(x,y) = c_n- c_n\int W(z) G(z,y) dz 
		\end{equation*}
		and since $|x|^{n-2}G(x,y)\lesssim 1$ and $K\psi \in L^1$ we can pass the limit in the integral in \eqref{eq: psi is int of GK}. This yields
		\begin{align*}
			\lim_{|x|\to +\infty}|x|^{n-2}\psi(x)=& -c_n\int K(y) \psi(y)dy + c_n\int \left(\int W(z) G(z,y) dz  \right)  K(y) \psi(y)dy .
		\end{align*}
		Now exchanging the order of integration in the second term and using relation \eqref{eq: psi is int of GK} we have
		\begin{align*}
			\int \left(\int W(z) G(z,y) dz  \right)  K(y) \psi(y)dy = &\int W(z) \int G(z,y) K(y) \psi(y)dy dz\\
			=& - \int W(z) \psi(z) dz
		\end{align*}
		from which the statement follows since $V= W +K$.

	\end{proof}
	
	Thanks to the previous proof we can derive the behavior of $\psi$ at infinity. We can prove further decay for $\psi$, and therefore better integrability, under suitable orthogonality assumptions on $\psi$. To obtain decay for $\psi$ we will prove that $|x|^\alpha \psi$ is bounded at infinity for a suitable $\alpha$.
	
	We take $R>0$ sufficiently large such that
	\begin{equation*}
		supp \ K \subset B(0,R) \quad \textnormal{and } \quad 	|x|^{n-2}|\psi(x)| \in L^\infty(B(0,R)^c). 
	\end{equation*}
	We recall that such $R$ exists thanks to the fact that $K$ is a simple function together with the fact that $|x|^{n-2}|\psi(x)| $ has a finite limit. We study the behavior of $\psi $ in $B(0,R)^c$. To do so we define the spaces 
	\begin{equation}
		\label{def: B_alpha}
		\mathcal{B}_\alpha = |x|^{-\alpha} L^\infty (B(0,R)^c) = \{ f\ :\ |x|^\alpha f \in L^\infty(B(0,R)^c)\ \}
	\end{equation}
	with the natural norm 
	\begin{equation*}
		\|f\|_{\mathcal{B}_\alpha } = \||x|^\alpha f\  \mathbbm{1}_{B(0,R)^c}\|_\infty. 
	\end{equation*}
	In Proposition \ref{prop: decay psi} we prove that $\psi $ belongs to such spaces $\mathcal{B}_\alpha$ for suitable $\alpha$. 
	
	We will use the following inequality. A proof can be found in Appendix \ref{app: proof of ineq}.

	\begin{lemma}
		\label{lemma: algebraic ineq}
		Let $N \in \N$ and $k =0, \ldots , N$ $l= 0, \ldots, k$. There exist $ c_{kl} \in \R$ such that 
		\begin{align*}
			\bigg| \frac{1}{|x-y|^{n-2}} - \frac{1}{|x|^{n-2}} \sum_{k=0}^N \sum_{\substack{l=0\\ k + l \leq N}}^k c_{kl}& \frac{|y|^{k+l}}{|x|^{k+l}} \cdot\frac{(x\cdot y )^{k-l}}{(|x||y|)^{k-l}}\bigg|\\
			&\lesssim  \frac{1}{|x-y|^{n-2}}  \left( \frac{|y|^{N + 1}}{|x|^{N + 1}} + \frac{|y|^{N + n-2}}{|x|^{N + n-2}}\right)
		\end{align*}
		for all $x, y \in \R^n \setminus \{0\}$ and $x \neq y $. 	
	\end{lemma}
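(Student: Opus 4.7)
The plan is to normalize and reduce to a scalar inequality, then split into three regimes. Set $t = |y|/|x| > 0$ and $c = (x \cdot y)/(|x||y|) \in [-1, 1]$, so that $|x - y|^2 = |x|^2(1 - 2tc + t^2)$. Via the reindexing $(k, l) \mapsto (m, p) := (k + l,\, k - l)$, the double sum in the statement takes the form $\sum_{m=0}^N t^m Q_m(c)$ where each $Q_m$ is a polynomial of degree $m$ in $c$ containing only monomials $c^p$ with $p \equiv m \pmod{2}$. This is precisely the general shape of the truncated Taylor expansion at $t = 0$ of
\begin{equation*}
F(t, c) := (1 - 2tc + t^2)^{-(n-2)/2} = \frac{|x|^{n-2}}{|x - y|^{n-2}}
\end{equation*}
(whose coefficients are the Gegenbauer polynomials $C_m^{(n-2)/2}(c)$), and I would \emph{choose} the $c_{kl}$ to be the corresponding Taylor coefficients, namely $c_{kl} := [c^{k-l}]\, C_{k+l}^{(n-2)/2}(c)$. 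Denote by $P_N(t, c)$ the resulting truncated polynomial.

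Dividing the claimed inequality by $|x|^{-(n-2)}$ and multiplying by $1/F(t, c) = (1 - 2tc + t^2)^{(n-2)/2}$ reduces it to the equivalent scalar claim
\begin{equation*}
\bigl|F(t, c) - P_N(t, c)\bigr| \lesssim F(t, c)\bigl(t^{N+1} + t^{N+n-2}\bigr), \qquad t > 0,\ c \in [-1, 1],\ (t, c) \neq (1, 1).
\end{equation*}
I would establish this by splitting into three ranges of $t$.

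For $0 < t \leq 1/2$: since $1 - 2tc + t^2 \geq (1 - t)^2 \geq 1/4$ uniformly in $c$, the function $F(\cdot, c)$ is $C^\infty$ on $[0, 1/2]$ with all $t$-derivatives bounded uniformly in $c \in [-1, 1]$, so Lagrange's remainder gives $|F - P_N| \lesssim t^{N+1}$, while $F \geq (3/2)^{-(n-2)}$ is bounded below. For $t \geq 2$: the two-sided bound $(t-1)^2 \leq 1 - 2tc + t^2 \leq (t+1)^2$ yields $F(t, c) \sim t^{-(n-2)}$, and $|P_N(t, c)| \lesssim t^N$ since each $Q_m$ is bounded on $[-1, 1]$; therefore $|F - P_N| \leq F + |P_N| \lesssim t^N$, while the right-hand side is $\geq F \cdot t^{N+n-2} \gtrsim t^N$. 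For $1/2 \leq t \leq 2$: the factor $t^{N+1} + t^{N+n-2}$ is bounded above and below by positive constants, and $F \geq (1+t)^{-(n-2)} \geq 3^{-(n-2)}$ is bounded below; away from the singular point $(1, 1)$ both $F$ and $P_N$ are uniformly bounded so both sides are comparable, while near $(1, 1)$ the function $F \to \infty$ and $P_N$ stays bounded, giving $|F - P_N| \leq F + O(1) \lesssim F$, matching the right-hand side $\gtrsim F$.

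The main obstacle is really the bookkeeping: identifying the correct choice of $c_{kl}$ as Gegenbauer coefficients — which is forced by the parity structure of the reindexed double sum — and then ensuring the estimate remains uniform across the singular locus $|x - y| \to 0$, which is what the intermediate regime $1/2 \leq t \leq 2$ handles. Both the small-$t$ and large-$t$ regimes reduce to routine Taylor-remainder and polynomial bounds, and the two terms $t^{N+1}$ and $t^{N+n-2}$ on the right-hand side correspond respectively to the dominant contribution for small $t$ and large $t$.
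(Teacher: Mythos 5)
Your proof is correct and follows essentially the same route as the paper: a split into the three regimes $|y|\ll|x|$, $|y|\simeq|x|$, $|y|\gg|x|$, with a Taylor expansion of $(1-2tc+t^2)^{-(n-2)/2}$ in the small-ratio regime (the paper expands $(1+s)^{-(n-2)/2}$ in $s=t^2-2tc$, which yields the same coefficients $c_{kl}$) and crude bounds plus the lower bound on the right-hand side elsewhere. Your normalization to a scalar inequality in $(t,c)$ and the identification of the truncation as $\sum_{m\le N}C_m^{(n-2)/2}(c)\,t^m$ merely streamline the bookkeeping that the paper handles by discarding the cross-terms with $k+l>N$ as an extra $O(t^{N+1})$ error.
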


	\begin{notation}
		For $ y \in \R^n$ and $\alpha  = (\alpha_1, \ldots, \alpha_n)\in \N^n$ we denote by $y^\alpha= \prod_{k=1}^n y_k^{\alpha_k}$.
	\end{notation}

	\begin{prop}
		\label{prop: decay psi}
		Let $\psi \in \dot{H}^1$ a solution of the equation $(-\D + V) \psi =0$ and $N = 0,1,2$. Then $ \psi \in \mathcal{B}_{N + n-1}$, that is $\psi$ decays like $|x|^{-(N + n-1)}$ at infinity, if $\int y^\alpha V \psi=0 $ for all multi-indices $ \alpha \in \N^n , |\alpha|\leq N$.
		
	\end{prop}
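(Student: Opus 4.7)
The plan is to combine the integral representation of $\psi$ with the kernel expansion in Lemma~\ref{lemma: algebraic ineq}. Applying the second resolvent identity $(-\D+W)^{-1}=(-\D)^{-1}-(-\D)^{-1}W(-\D+W)^{-1}$ to $\psi=-(-\D+W)^{-1}(K\psi)$ telescopes to $\psi=-(-\D)^{-1}(V\psi)$, i.e.
\begin{equation*}
    \psi(x) = -c_n\int \frac{V(y)\psi(y)}{|x-y|^{n-2}}\,dy,
\end{equation*}
where the integrand lies in $L^1$ by the Lorentz duality $V \in L^{n/2,1} = (L^{n/(n-2),\infty})^*$ and $\psi \in L^{n/(n-2),\infty}$ (known from the proof of Theorem~\ref{th: res in weak sp}).

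Next, I apply Lemma~\ref{lemma: algebraic ineq} at order $N$: the kernel $|x-y|^{-(n-2)}$ rewrites as a finite sum of monomials $c_{kl}|x|^{-(n-2+2k)}|y|^{2l}(x\cdot y)^{k-l}$ with $k+l \le N$, each a polynomial in $y$ of total degree at most $N$ times an explicit $x$-coefficient, plus a remainder $R_N(x,y)$ controlled by the bound given in the lemma. Integrated against $V(y)\psi(y)$, every polynomial summand is a linear combination of the moments $\int y^\alpha V\psi\,dy$ with $|\alpha| \le N$, all of which vanish by the orthogonality hypothesis. This leaves
\begin{equation*}
    \psi(x) = -c_n\int R_N(x,y)\,V(y)\psi(y)\,dy,
\end{equation*}
and the task reduces to proving this is $O(|x|^{-(N+n-1)})$.

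I would estimate the remainder by splitting the $y$-domain at $|y|=|x|/2$. On $\{|y|\le|x|/2\}$, the inequality $|x-y|\ge|x|/2$ yields $|R_N(x,y)|\lesssim |y|^{N+1}/|x|^{N+n-1}$, so this contribution is controlled by $|x|^{-(N+n-1)}\int_{|y|\le|x|/2}|y|^{N+1}|V\psi|\,dy$. On $\{|y|>|x|/2\}$, I use the alternate bound $|R_N(x,y)|\lesssim|y|^{N+n-2}/(|x-y|^{n-2}|x|^{N+n-2})$ together with $\|V\mathbbm{1}_{\{|y|>|x|/2\}}\|_{n/2,1}\to 0$ as $|x|\to\infty$, the latter being a direct adaptation of the argument leading to \eqref{comp: limit |x|G outside ball}.

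The main obstacle is the uniform-in-$x$ control of $\int_{|y|\le|x|/2}|y|^{N+1}|V\psi|\,dy$. The existence of the moments $\int y^\alpha V\psi$ with $|\alpha|\le N$ already forces $|y|^k V\psi \in L^1$ for $k\le N$ (using $|y|^k\lesssim \sum_{|\beta|=k}|y^\beta|$ for even $k$, with interpolation for odd $k$), but one needs one more power. I would supply it inductively: for $N\ge 1$, the previous case of the proposition gives $\psi\in\mathcal{B}_{N+n-2}$, so that $|y|^{N+1}|\psi(y)|\lesssim|y|^{-(n-3)}$ at infinity, which combined with the decomposition $V=W+K$ (treating $K$ trivially by compact support and $W$ via Lorentz duality on the decaying tail of $\psi$) yields the desired moment bound. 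The base case $N=0$ relies additionally on the strict decay $|x|^{n-2}\psi(x)\to 0$ given by Theorem~\ref{th: res in weak sp} under $\int V\psi = 0$, which one uses together with dominated convergence to close the argument.
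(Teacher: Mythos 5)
Your first half (the representation $\psi=-c_n\int|x-y|^{-(n-2)}V\psi\,dy$, subtraction of the vanishing moments via Lemma \ref{lemma: algebraic ineq}, and the split of the remainder at $|y|\sim|x|/2$) matches the paper, but the way you close the estimate has a genuine gap: you try to bound $\int R_N(x,y)V\psi\,dy$ \emph{directly}, feeding in only the previously known decay of $\psi$ (namely $\psi\in\mathcal{B}_{N+n-2}$, or for $N=0$ the limit from Theorem \ref{th: res in weak sp}), and this loses exactly one power of $|x|$. Concretely, on $\{|y|\le|x|/2\}$ you need $\int_{|y|\le|x|/2}|y|^{N+1}|V\psi|\,dy=O(1)$, but with $\psi\in\mathcal{B}_{N+n-2}$ the integrand is only $\lesssim|V(y)|\,|y|^{3-n}$ on the tail, and $V\in L^{n/2,1}$ does not make this integrable: in $\R^3$ take $V(y)\sim|y|^{-2}(\log|y|)^{-2}$ at infinity, which lies in $L^{3/2,1}$ but not in $L^1$, so the partial integrals grow essentially like $|x|$ (up to logarithms) and your near-region bound only yields $O(|x|^{-(N+n-2)})$. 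The far region has the same defect: with the a priori decay you get $o(|x|^{-(N+n-2)})$, and to gain the missing power you would need $\sup|y|^{N+n-1}|\psi(y)|<\infty$, i.e.\ precisely the conclusion. Smallness of $\|V\mathbbm{1}_{\{|y|>|x|/2\}}\|_{n/2,1}$ (or of $\|W\|_{n/2,1}$) only shrinks constants; it cannot produce an extra power of decay. Likewise, in your base case the information $|x|^{n-2}\psi(x)\to0$ is purely qualitative, and dominated convergence cannot convert it into the quantitative rate $O(|x|^{-(n-1)})$.

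This is exactly the obstruction the paper's proof is designed to avoid: after the moment subtraction it splits $V=W+K$, writes $\psi=f+\mathcal{S}\psi$ where $f$ is the integral over $\{|y|\le R\}$ (compactly supported, so all moments of $V\psi$ there are finite and $f\in\mathcal{B}_{N+n-1}$ by Lemma \ref{lemma: algebraic ineq}), while $\mathcal{S}$, defined in \eqref{def: S}, involves only the small piece $W$ on $\{|y|\ge R\}$. One then shows $\mathcal{S}$ is a contraction on $\mathcal{B}_{N+n-1}$ and on $\mathcal{B}_{N+n-2}$ — crucially, the estimate of $\mathcal{S}\varphi$ uses the \emph{assumed} decay $|y|^{-(N+n-1)}$ of the argument $\varphi$, so the target decay of the unknown enters self-consistently rather than being imported from the previous step — and concludes by uniqueness of the fixed point of $\varphi=f+\mathcal{S}\varphi$ that $\psi$ coincides with the solution lying in $\mathcal{B}_{N+n-1}$. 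To repair your argument you would need to replace the direct/inductive estimate by some such fixed-point (or equivalent self-improving) mechanism; as written, the scheme cannot reach the rate $|x|^{-(N+n-1)}$.
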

	
	\begin{proof}

		Using the integral kernel of $\Delta^{-1}$ on $\mathbb{R}^n$ we can write 
		\begin{equation}
			\label{eq: psi = delta V}
			\psi(x)= -c_n \int \frac{1}{ |x-y|^{n-2}} V(y) \psi (y) dy .
		\end{equation}
		By assumption for $N\leq 2$ and $ k =0, \ldots N$ we have 
		\begin{equation*}
			\int \bigg(\sum_{\substack{l=0\\ k + l \leq N}}^k c_{kl}\frac{|y|^{k+l}}{|x|^{k+l}} \cdot\frac{(x\cdot y )^{k-l}}{(|x||y|)^{k-l}} \bigg) V(y) \psi (y)dy =0. 
		\end{equation*}
		Indeed, the terms in the sum are given by 
		\begin{equation*}
			|y|^{k+l}\frac{(x\cdot y )^{k-l} }{|y|^{k-l}} = |y|^{2l} (x\cdot y )^{k-l} 
		\end{equation*}
		and are therefore polynomials in $y$ of degree $ k + l $, since $k + l \leq N$ they vanish when we integrate them against $V\psi$. 
		
		Thanks to these vanishing quantities, we then rewrite \eqref{eq: psi = delta V} as 
		\begin{align}
			\psi(x)=& c_n \int\bigg( \frac{1}{|x|^{n-2}} 
			\bigg(\sum_{k=0}^N\sum_{\substack{l=0\\ k + l \leq N}}^k c_{kl}\frac{|y|^{k+l}}{|x|^{k+l}} \cdot\frac{(x\cdot y )^{k-l}}{(|x||y|)^{k-l}} \bigg)- \frac{1}{ |x-y|^{n-2}}\bigg) V(y) \psi (y) dy \nonumber \\
			=& \underbrace{c_n  \int_{|y|\leq R}\bigg( \frac{1}{|x|^{n-2}} 
				\bigg(\sum_{k=0}^N\sum_{\substack{l=0\\ k + l \leq N}}^k c_{kl}\frac{|y|^{k+l}}{|x|^{k+l}} \cdot\frac{(x\cdot y )^{k-l}}{(|x||y|)^{k-l}} \bigg)- \frac{1}{ |x-y|^{n-2}}\bigg)V(y) \psi (y) dy}_{=: f(x)} \label{def: f}\\
			& + c_n\int_{|y|\geq R}\bigg( \frac{1}{|x|^{n-2}} 
			\bigg(\sum_{k=0}^N\sum_{\substack{l=0\\ k + l \leq N}}^k c_{kl}\frac{|y|^{k+l}}{|x|^{k+l}} \cdot\frac{(x\cdot y )^{k-l}}{(|x||y|)^{k-l}} \bigg)- \frac{1}{ |x-y|^{n-2}}\bigg) W(y) \psi (y) dy \nonumber
		\end{align}
		where we recall that $R$ is a sufficiently large radius such that
		\begin{equation*}
			supp \ K \subset B(0,R), \quad \ |x|^{n-2}|\psi(x)| \in L^\infty(B(0,R)^c). 
		\end{equation*}
		Defining the operator
		\begin{equation}
			\label{def: S}
			\mathcal{S} : \varphi \mapsto c_n \int_{|y| \geq R} \bigg( \frac{1}{|x|^{n-2}} 
			\bigg(\sum_{k=0}^N\sum_{\substack{l=0\\ k + l \leq N}}^k c_{kl}\frac{|y|^{k+l}}{|x|^{k+l}} \cdot\frac{(x\cdot y )^{k-l}}{(|x||y|)^{k-l}} \bigg)- \frac{1}{ |x-y|^{n-2}}\bigg) W (y) \varphi (y) dy
		\end{equation}
		we can rewrite the previous identity as 
		\begin{equation}
			\label{eq: psi = f + W psi}
			\psi = f + \mathcal{S} \psi .
		\end{equation}
		We will prove that the operator $\mathcal{S}$ is a contraction in the spaces $\mathcal{B}_{N+n-1}, \mathcal{B}_{N+n-2}$ and that $f \in \mathcal{B}_{N+n-1}\subset \mathcal{B}_{N+n-2}$, hence the equation 
		\begin{equation}
			\label{eq: fixed point W}
			\varphi = f + \mathcal{S} \varphi
		\end{equation}
		has a unique solution in $\mathcal{B}_{N+n-1}$ as well as a unique solution in $\mathcal{B}_{N+n-2}$. This proves the statement. Indeed, let $N=0$, then identity \eqref{eq: psi = f + W psi} together with item $ii)$ in Theorem \ref{th: res in weak sp} tell us that $\psi$ is the unique solution in $\mathcal{B}_{n-2}$ of the fixed point problem \eqref{eq: fixed point W}. The problem \eqref{eq: fixed point W} has a unique solution in $\mathcal{B}_{N + n-1}$ as well, since 
		\begin{equation*}
			\mathcal{B}_{N + n-1}\subset  \mathcal{B}_{n-2} \quad N=  0, 1,2
		\end{equation*}
		then $\psi $ must be in $\mathcal{B}_{N + n-1}$. 
		
		To conclude we must prove $f \in \mathcal{B}_{N+n-1}\subset \mathcal{B}_{N+n-2}$ and that $\mathcal{S}$ is a contraction on $\mathcal{B}_{N+n-1}$ and $\mathcal{B}_{N+n-2}$. 
		
		We use Lemma \eqref{lemma: algebraic ineq} to prove $f \in \mathcal{B}_{N+n-1}\subset \mathcal{B}_{N+n-2}$. Let $|x|\geq 2R$, from the definition of $f$ we first have 
		\begin{align*}
			|f(x)|\lesssim & \int_{|y| \leq R}  \frac{1}{|x-y|^{n-2}}  \left( \frac{|y|^{N + 1}}{|x|^{N + 1}} + \frac{|y|^{N + n-2}}{|x|^{N + n-2}}\right) |V(y) \psi (y) |dy \\
			\lesssim & \frac{1}{(|x|- R)^{n-2}} \left( \frac{1}{|x|^{N + 1}} + \frac{1}{|x|^{N + n-2}}\right) \int_{|y| \leq R} |V| |\psi| dy 
		\end{align*}
		hence $|x|^{N+n-1} |f(x)|$ is bounded for $|x|\geq 2R$. For $R \leq |x|\leq 2R$ we rewrite $f$ as 
		\begin{align*}
			f(x)= & c_n  \int_{|y|\leq R} \frac{1}{|x|^{n-2}} 
			\bigg(\sum_{k=0}^N\sum_{\substack{l=0\\ k + l \leq N}}^k c_{kl}\frac{|y|^{k+l}}{|x|^{k+l}} \cdot\frac{(x\cdot y )^{k-l}}{(|x||y|)^{k-l}} \bigg)V(y) \psi (y) dy\\ &
			+ \psi (x)  + \int_{|y|\geq R} \frac{1}{|x-y|^{n-2}} V (y) \psi (y) dy  
		\end{align*}
		then since $ |x |^{n-2} \psi (x) \in L^\infty (B(0, R)^c)$, the bound
		\begin{align*}
			|f(x)| \lesssim & \int  |V(y) \psi (y)| dy  + |\psi (x)| +  \int_{|y|\geq R} \frac{|V (y)|}{|x-y|^{n-2}} | y |^{n-2} |\psi (y)| dy 
		\end{align*}
		implies that $|x|^{N+n-1} |f(x)|$ is bounded for $R \leq |x|\leq 2R$ . 
		
		We now prove that $\mathcal{S}$ is a contraction on $\mathcal{B}_{N+n-2}$. 
		Thanks to Lemma \ref{lemma: algebraic ineq}
		\begin{align}
			|\mathcal{S} \varphi(x)|\lesssim & \int_{|y|\geq R} \frac{1}{|x-y|^{n-2}}  \left( \frac{|y|^{N + 1}}{|x|^{N + 1}} + \frac{|y|^{N + n-2}}{|x|^{N + n-2}}\right) |W (y)\varphi(y)| dy \nonumber \\
			\lesssim & \int_{|y|\geq R} \frac{1}{|x-y|^{n-2}} \frac{|y|^{N + 1}}{|x|^{N + 1}}|W (y)\varphi(y)| dy  \nonumber	\\ 
			& + \frac{1}{|x|^{N + n-2}}\|\varphi\|_{\mathcal{B}_{N+n-2}}\int  \frac{|W (y)|}{|x-y|^{n-2}}  dy \nonumber \\
			\lesssim & \int_{|y|\geq R} \frac{1}{|x-y|^{n-2}} \frac{|y|^{N + 1}}{|x|^{N + 1}}|W (y)\varphi(y)| dy  \label{comp: int y^N+1 / x^N+1}	\\ 
			& + \frac{1}{|x|^{N + n-2}}\|\varphi\|_{\mathcal{B}_{N+n-2}} \|W\|_{n/2, 1}  \label{comp: S phi less} \nonumber
		\end{align}
		We split the domain of integration in the regions 
		\begin{eqnarray}
			\{ |y| \geq R \} & = & \left\{ |y| \geq R \ \mbox{and} \ |y| < \frac{|x|}{2} \right\} \sqcup  \left\{ |y| \geq R \ \mbox{and} \  \frac{|x|}{2} \leq  |y| \leq 2 |x| \right\}\nonumber\\
			& & \sqcup  \left\{ |y| \geq R \ \mbox{and} \ |y| > 2 |x| \right\}  \nonumber \\
			& = : & E_{<} \sqcup E_\approx \sqcup E_>  \label{def: regions E}
		\end{eqnarray} 
		and we remark the following properties 
		\begin{align*}
			|y|^{-1} \lesssim |x|^{-1} \ \textnormal{on } E_> \cup E_\approx \quad \quad \quad  |x|^{-1} \lesssim |y|^{-1},\ |x-y|^{-1} \lesssim |x|^{-1} \ \textnormal{on } E_<.
		\end{align*}
		We use the inequalities 
		\begin{align*}
			\frac{|y|^{N + 1}}{|x-y|^{n-2}|x|^{N + 1}}|\varphi(y)| \lesssim & \begin{cases}
				\dfrac{|y|^{N+ n-2} |\varphi(y)|}{|x|^{N+ n-2}|x-y|^{n-2}} &  \textnormal{on } E_>\cup E_\approx\\
				\dfrac{|y|^{N+ n-2} |\varphi(y)|}{|x|^{N} |y|^{n-2}|x-y|^{n-2}}  &  \textnormal{on } E_<
			\end{cases}\\
			\lesssim & \begin{cases}
				\dfrac{1}{|x|^{N+ n-2}}\|\varphi\|_{\mathcal{B}_{N+n-2}}\dfrac{1}{|x-y|^{n-2}}  & \textnormal{on } E_>\cup E_\approx\\
				\dfrac{1}{|x|^{N}}\|\varphi\|_{\mathcal{B}_{N+n-2}}\dfrac{1}{|y|^{n-2}|x-y|^{n-2}} &  \textnormal{on } E_<
			\end{cases}
		\end{align*}
		obtaining
		\begin{align*}
			\int_{|y|\geq R} \frac{1}{|x-y|^{n-2}} \frac{|y|^{N + 1}}{|x|^{N + 1}}|W (y)\varphi(y)| dy  \lesssim  & \|\varphi\|_{\mathcal{B}_{N+n-2}} \bigg (\dfrac{1}{|x|^{N+ n-2}}\int_{ E_>\cup E_\approx}  \dfrac{|W(y)|}{|x-y|^{n-2}} dy \\
			& \hspace{2.3cm} + \dfrac{1}{|x|^{N}}\int_{ E_<}  \dfrac{|W(y)|}{|y|^{n-2}|x-y|^{n-2}} dy  \bigg)\\
			\lesssim & \dfrac{1}{|x|^{N+ n-2}}\|\varphi\|_{\mathcal{B}_{N+n-2}} \|W\|_{n/2, 1} 
		\end{align*}
		where we used Lemma \ref{lemma: bound integral gWg} in the last inequality.

		Having estimated the integral in \eqref{comp: int y^N+1 / x^N+1} we can go back to the inequality on $|\mathcal{S} \varphi(x)|$ from which we finally have 
		\begin{align*}
			|\mathcal{S} \varphi(x)|\lesssim  \frac{1}{|x|^{N+ n-2}}\|\varphi\|_{\mathcal{B}_{N+n-2}}\|W\|_{n/2, 1} 
		\end{align*}
		hence 
		\begin{equation*}
			\|\mathcal{S} \varphi\|_{\mathcal{B}_{N+n-2}} \lesssim \|W\|_{n/2, 1} \|\varphi\|_{\mathcal{B}_{N+n-2}}
		\end{equation*}
		which implies that $\mathcal{S}$ is a contraction on $ \mathcal{B}_{N+n-2}$ thanks to the smallness of $ \|W\|_{n/2, 1}$. 
		
		As a last step, we need to prove that $\mathcal{S}$ is a contraction on $ \mathcal{B}_{N+n-1}$ as well. We still use the subdivision \eqref{def: regions E}. As we found before, thanks to Lemma \ref{lemma: algebraic ineq}
		\begin{align*}
			|\mathcal{S} \varphi(x)|\lesssim & \int_{|y|\geq R} \frac{1}{|x-y|^{n-2}}  \left( \frac{|y|^{N + 1}}{|x|^{N + 1}} + \frac{|y|^{N + n-2}}{|x|^{N + n-2}}\right) |W (y)\varphi(y)| dy . 
		\end{align*}
		We use the inequalities
		\begin{align*}
			\frac{|y|^{N + 1}}{|x-y|^{n-2}|x|^{N + 1}}|\varphi(y)| =& \frac{|y|^{N +n- 1}}{|x-y|^{n-2}|x|^{N + 1}|y|^{n-2}}|\varphi(y)|\\
			\lesssim & 
			\begin{cases}
				\dfrac{\|\varphi\|_{\mathcal{B}_{N+n-1}}}{|x|^{N+n-1
				}} \cdot \dfrac{1}{|x-y|^{n-2}} & \textnormal{on } E_>\cup E_\approx\\
				\dfrac{\|\varphi\|_{\mathcal{B}_{N+n-1}}}{|x|^{N+n-1}} \cdot \dfrac{1}{|y|^{n-2}} & \textnormal{on } E_<
			\end{cases}
		\end{align*}
		and
		\begin{align*}
			\frac{|y|^{N + n-2}}{|x-y|^{n-2}|x|^{N + n-2}} |\varphi(y)| =& \frac{1}{|x-y|^{n-2}} \cdot	\frac{|y|^{N + n-1}}{|x|^{N + n-2}|y|} |\varphi(y)|\\
			\lesssim& 
			\begin{cases}
				\dfrac{\|\varphi\|_{\mathcal{B}_{N+n-1}}}{|x|^{N + n-1}}\cdot \dfrac{1}{|x-y|^{n-2}} & \textnormal{on } E_>\cup E_\approx\\
				\dfrac{\|\varphi\|_{\mathcal{B}_{N+n-1}}}{|x|^{N +1}}\cdot \dfrac{1}{|y|^{n-2}|x-y|^{n-2}}    & \textnormal{on } E_<,
			\end{cases}
		\end{align*}
		thanks to which we can bound $	|\mathcal{S} \varphi(x)|$ by 
		\begin{align*}
			|\mathcal{S} \varphi(x)| \lesssim & \frac{\|\varphi\|_{\mathcal{B}_{N+n-1}}}{|x|^{N + n-1}} \left ( \int_{E_>\cup E_\approx} \frac{|W(y)|}{|x-y|^{n-2}} dy + \int_{E_<} \frac{|W(y)|}{|y|^{n-2}} dy\right) \\
			& + \frac{\|\varphi\|_{\mathcal{B}_{N+n-1}}}{|x|^{N +1}} \int_{E_<} \frac{|W(y)|}{|y|^{n-2}|x-y|^{n-2}}\\
			\lesssim & \frac{\|\varphi\|_{\mathcal{B}_{N+n-1}}}{|x|^{N + n-1}}  \|W\|_{n/2, 1}  
		\end{align*}
		where we used again Lemma \ref{lemma: bound integral gWg} to obtain the last inequality. The previous bound is equivalent to 
		\begin{equation*}
			\|\mathcal{S} \varphi\|_{\mathcal{B}_{N+n-1}} \lesssim \|W\|_{n/2, 1} \|\varphi\|_{\mathcal{B}_{N+n-1}}
		\end{equation*}
		which proves that  $\mathcal{S}$ is a contraction on $ \mathcal{B}_{N+n-1}$ thanks to the smallness of $ \|W\|_{n/2, 1}$.

	\end{proof}

	\begin{rem}
		\label{rem: decay psi implies orthogon}
		In the previous proof we showed that $\int V \psi =0$ implies $\psi \in \mathcal{B}_{n-1}$. We remark that also the opposite implication is true: if $\psi \in \mathcal{B}_{n-1}$ then the integral of $V \psi $ must be zero. The same holds for the integral of $V\psi$ with monomials of order one, indeed we show below that $\int y_k V \psi =0$ for all $k =1, \ldots, n$ if and only if $\psi \in \mathcal{B}_{n}$.
		
		Assume $ \psi \in \mathcal{B}_{n-1}$, since $ \psi $ is also a solution of $(-\D + V) \psi =0$ we have 
		\begin{equation*}
			\int V (y)\psi (y)= \int\D\psi(y) = \lim_{\varepsilon \to 0 } \int \D\psi (y)\chi (\varepsilon y)
		\end{equation*}
		with $\chi$ a smooth cutoff on $ B(0,1)$. After integration by parts we obtain 
		\begin{equation*}
			\int V(y) \psi(y) = \lim_{\varepsilon \to 0 } \int \varepsilon^2 \psi(y) \D\chi (\varepsilon y) =0 
		\end{equation*}
		where the limit is zero by dominated convergence, thanks to the domination 
		\begin{align*}
			|\varepsilon^2 \psi(y) \D\chi (\varepsilon y)| =  |\varepsilon y|^2 |\D\chi (\varepsilon y)| \frac{1}{|y|^2} | \psi(y)|\lesssim \frac{| \psi(y)|}{|y|^2} 
		\end{align*}
		with $\frac{| \psi(y)|}{|y|^2} \lesssim \frac{1}{|y|^{n + 1}} $ integrable on the support of $\D\chi$ (it would actually be sufficient that $\psi $ belongs to $\mathcal{B}_{n-2+ \delta}$ for some positive $\delta$). 
		
		Similarly, if $ \psi \in \mathcal{B}_{n}$ we have 
		\begin{align*}
			\int y_k V(y) \psi(y) = &\lim_{\varepsilon \to 0 } \int y_k  \D\psi (y)\chi (\varepsilon y) =  \lim_{\varepsilon \to 0 } \int (y_k \varepsilon^2  \D\chi (\varepsilon y) + 2 \varepsilon \partial_{y_k}\chi (\varepsilon y )) \psi (y) \\
			=&\  0
		\end{align*}
		again by dominated convergence since 
		\begin{align*}
			|(y_k \varepsilon^2  \D\chi (\varepsilon y) + 2 \varepsilon \partial_{y_k}\chi (\varepsilon y )) \psi (y)| = &  |\varepsilon^2 y_k  | y| \D\chi (\varepsilon y) + 2 | \varepsilon y| \partial_{y_k}\chi (\varepsilon y )| \frac{1}{|y|} |\psi (y)| \lesssim\frac{| \psi(y)|}{|y|} 
		\end{align*}
		with $\frac{|\psi (y)|}{|y|}\lesssim \frac{1}{|y|^{n+1}} $ integrable on the support of $\partial_{y_k}\chi$ (as before, $\psi \in \mathcal{B}_{n-1 + \delta}$ for $\delta>0$ would be enough). 
		
		We also remark that we can not use the same reasoning to prove that under the assumption $\psi \in \mathcal{B}_{n+1}$ we have $\int y^\alpha V \psi =0$ for all $|\alpha|=2$. Indeed, for $k \neq l $ 
		\begin{align*}
			\int y_k y_l  V(y) \psi(y) = &\lim_{\varepsilon \to 0 } \int y_ky_l   \D\psi (y)\chi (\varepsilon y) \\
			=&  \lim_{\varepsilon \to 0 } \int (y_k y_l\varepsilon^2  \D\chi (\varepsilon y) + 2 \varepsilon y_k  \partial_{y_l}\chi (\varepsilon y ) +2 \varepsilon y_l \partial_{y_k}\chi (\varepsilon y )) \psi (y) =0
		\end{align*}
		where the domination in the dominated convergence theorem is given by $\psi$ itself. On the contrary, integrating against $y_k^2$ we can not conclude that the integral vanishes since 
		\begin{align*}
			\int y_k^2  V(y) \psi(y) = &\lim_{\varepsilon \to 0 } \int y_k^2  \D\psi (y)\chi (\varepsilon y) \\
			=&  \lim_{\varepsilon \to 0 } \int (y_k^2 \varepsilon^2  \D\chi (\varepsilon y) + 4y_k  \varepsilon \partial_{y_k}\chi (\varepsilon y ) + 2 \chi (\varepsilon y )) \psi (y)\\
			= & 2\int  \psi (y) dy .
		\end{align*}
		However, we remark the following fact. If $\int y^\alpha V\psi =0$ for all $\alpha$ with $|\alpha|\leq 2$, the previous computations shows us that $\int  \psi =0$. We also know that under this orthogonality assumption $\psi \in \mathcal{B}_{n+1}$, as stated in Theorem \ref{th: properties of 0 resonance}, hence a sufficiently fast decaying eigenfunction must be orthogonal to constants. 
	\end{rem}
	\bigskip
	
	We are now able to prove Theorem \ref{th: properties of 0 resonance}. 
	
	\begin{proof}[\textbf{Proof of Theorem \ref{th: properties of 0 resonance}}]
		
		The proof is a direct consequence of the previous propositions. 
		
		We start by proving item $i)$. If $\int V \psi =0$, we can apply Proposition \ref{prop: decay psi} for $N=0$, obtaining $\psi  \in \mathcal{B}_{n-1}$ and consequently $ \psi \in L^2 (B(0,R)^c)$. Moreover, $\psi \in \dot{H}^1\subset L^{\frac{2n}{n-2}}$ by assumption and $L^{\frac{2n}{n-2}}_{loc} \subset L^2_{loc}$ since $\frac{2n}{n-2} \geq 2 $ for $n\geq 3$. For $n=3,4$, if $\int V\psi\neq 0$ the limit of $|x|^{n-2}\psi $ is finite and non zero thanks to Theorem \ref{th: res in weak sp}. Hence, $\frac{c}{|x|^{n-2}}\leq\psi \leq \frac{c'}{|x|^{n-2}}$ at infinity and therefore $\psi$ is not in $ L^2$. We conclude that for dimensions three and four $\int V\psi =0$ is also a necessary condition to have $\psi \in L^2$.

		Now assume $\int y_k V(y)\psi(y)dy=0$ for all $k = 1, \ldots , n$.
		We can apply Proposition \ref{prop: decay psi} with $N=1$ which implies $\psi \in \mathcal{B}_n$ and hence $ \psi\in L^{1, \infty}$, since $\frac{1}{|x|^n } \in L^{1, \infty}$. Item $ii)$ is then proved. 
		
		We conclude by proving item $iii)$. Thanks to the assumptions on $\int y_k V(y)\psi(y)dy$ and $\int y_k y_l V(y)\psi(y)dy$ for $k, l = 1, \ldots, n$ we can apply Proposition \ref{prop: decay psi} with $N=2$. Then
		$\psi \in \mathcal{B}_{n+1}$ implies $\psi \in L^1(B(0,R)^c)$ while $\psi\in L^2$, that we already have from item $i)$ since $\int V \psi=0$, implies $\psi \in L^1_{loc}$. Alternatively, this last inclusion can be proved by interpolation. 
		Since $\psi $ is in $\mathcal{B}_{n+1}$ we have that $|x|\psi$ is bounded by $|x|^{-n}$ at infinity and hence $|x|\psi \in L^{1, \infty}$. We can then write 
		\begin{equation*}
			\psi (x)	= \frac{1}{|x|} b(x)  \ \textnormal{ with }\ \frac{1}{|x|} \in L^{n, \infty},  b \in L^{1, \infty}, 
		\end{equation*}
		which by H\"older inequality \eqref{eq: holder in lorentz space} implies $\psi \in L^{\frac{n}{n+1}, \infty}$. Having 
		\begin{equation*}
			\psi \in L^{\frac{n}{n+1}, \infty} \cap L^{2, \infty}
		\end{equation*}
		we can conclude $\psi \in L^1$ by interpolation (Proposition \ref{prop: f in intersection of L^p infty}).

	\end{proof}

	\vspace{1cm}

	\thanks{\textbf{Acknowledgments:} This work received support from the University Research School EUR-MINT
		(State support by the National Research Agency' Future Investments program, reference number ANR-18-EURE-0023). The author would like to thank Professor H. Mizutani for useful discussions on this topic and the anonymous referee for his/her comments that allowed to improve the paper, in particular Lemma \ref{lemma: algebraic ineq}.}

	\appendix 
	
	\section{Facts about Lorentz spaces}
	\label{section: appendix lorentz spaces}
	We collect here a few properties of Lorentz spaces that we have used. The following statements hold on $\mathbb{R}^n$ for any $n$. 
	
	First of all we recall the definition of the quasinorm 
	\begin{align*}
		\|f\|_{p,q}:= p^{1/q}\left(\int_0^\infty t^{q-1} (d_f(t))^{q/p} dt \right)^{1/q} 
	\end{align*}
	for $q<\infty$ or 
	\begin{equation*}
		\| f\|_{p, \infty} := \sup_{t\geq 0} t d_f(t)^{1/p}<\infty
	\end{equation*}
	otherwise. Observe that $L^{p,p}= L^p$ and that $L^{p,\infty}$ is the weak $L^p$ space. The quantity we just defined is only a quasinorm, since it holds 
	\begin{equation}
		\label{eq: triangular ineq lorentz sp}
		\|f+g\|_{p,q} \lesssim_{p,q} \|f\|_{p,q} + \|g\|_{p,q}
	\end{equation}
	(see inequality (1.4.9) in \cite{grafakos}). Only for $p >1$ and any $q \in [1,\infty]$ the space $L^{p,q}$ is normable (\cite{hunt_lorentz_sp}). 
	
	\begin{rem}
		\label{rem: norms via decr rearr}
		We recall that the quasinorm of the Lorentz space can also be defined via the decreasing rearrangement $f^*(t)= \inf \{s>0\ |\ d_f(s)<t\}$ as 
		\begin{equation*}
			\|f\|_{p,q}:= \left(\int_0^\infty t^{q/p-1} f^*(t)^{q} dt \right)^{1/q} <\infty
		\end{equation*}
		for $q<\infty$ or 
		\begin{equation*}
			\| f\|_{p, \infty} := \sup_{t\geq 0} t^{1/p} f^*(t)<\infty
		\end{equation*}
		otherwise.

	\end{rem}
	
	Lorentz spaces are growing spaces with respect to the second index, in particular we have the chain of inclusions 
	\begin{equation}
		\label{eq: inclusions lorentz spaces}
		L^{p,q_1} \subset  L^{p,q_2}\ \textnormal{ for any } q_1<q_2
	\end{equation}
	(see Proposition 1.4.10 in \cite{grafakos}). 
	
	We also have a two indexed H\"{o}lder inequality \begin{equation}
		\label{eq: holder in lorentz space}
		\|f g\|_{L^{p,q}} \leq \|f\|_{L^{p_1,q_1}} \|g\|_{L^{p_2,q_2}}
	\end{equation}
	for any $p_1,q_1,p_2,q_2$ such that $\frac{1}{p_1}+ \frac{1}{p_2}=\frac{1}{p}$ and $\frac{1}{q_1}+ \frac{1}{q_2}=\frac{1}{q}$. Inequality \eqref{eq: holder in lorentz space} can be easily proved using the definitions given in Remark \ref{rem: norms via decr rearr}. For $q=\infty$ it follows directly from the definition of 
	$\|\cdot\|_{p, \infty}$, while for $q\in (0, \infty)$ it is obtained using inequality $\int f^\alpha g^\beta \frac{dt}{t} \leq (\int f\frac{dt}{t})^\alpha (\int g \frac{dt}{t})^\beta$ for $\alpha$ and $\beta$ which sum to 1. In particular taking $\alpha= \frac{q_2}{q_1+q_2}$ and $\beta= \frac{q_1}{q_1+q_2}$
	\begin{align*}
		\|fg\|_{p,q}^q =& \int (t^{\frac{1}{p}} f^* g^*)^q \frac{dt}{t} = \int [(t^{\frac{1}{p_1}} f^*)^{q_1} ]^{\frac{q_2}{q_1+q_2}} [(t^{\frac{1}{p_2}} f^*)^{q_2} ]^{\frac{q_1}{q_1+q_2}} \frac{dt}{t}\\
		\leq& (\int (t^{\frac{1}{p_1}} f^*)^{q_1}\frac{dt}{t})^{\frac{q_2}{q_1+q_2}} (\int (t^{\frac{1}{p_2}} f^*)^{q_2}\frac{dt}{t})^{\frac{q_1}{q_1+q_2}}
	\end{align*}
	from which\eqref{eq: holder in lorentz space} follows taking the power $\frac{1}{q}= \frac{q_1+q_2}{q_1q_2}$

	The following property is a sort of interpolation over the first index of the space.

	\begin{prop}
		\label{prop: f in intersection of L^p infty}
		Let $f \in L^{p_0, \infty} \cap L^{p_1, \infty} $ , then $f \in L^{p, q}$ for any $p\in (p_0, p_1)$ and any $q\in (0, \infty]$. 
	\end{prop}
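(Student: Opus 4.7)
The plan is to work with the decreasing rearrangement $f^*$ as described in Remark \ref{rem: norms via decr rearr}, and to use the two hypotheses to obtain pointwise control of $f^*(t)$ in the two natural regimes $t$ small and $t$ large. Without loss of generality assume $p_0 < p_1$. The two assumptions give
\begin{equation*}
f^*(t) \leq \|f\|_{p_0,\infty}\, t^{-1/p_0} \quad \text{and} \quad f^*(t) \leq \|f\|_{p_1,\infty}\, t^{-1/p_1} \qquad \text{for all } t>0.
\end{equation*}
Since $1/p_1 < 1/p < 1/p_0$, the first bound is the sharper one for large $t$ and the second for small $t$.

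For $q=\infty$ the conclusion is immediate: applying whichever of the two bounds gives the smaller value at each $t$ yields $\sup_{t>0} t^{1/p} f^*(t) < \infty$, so $f \in L^{p,\infty}$.

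For $q \in (0,\infty)$ I would split the defining integral at $t=1$ (or at any convenient threshold) and estimate each piece with the appropriate bound:
\begin{align*}
\|f\|_{p,q}^q &= \int_0^1 t^{q/p -1}\, f^*(t)^q\, dt + \int_1^\infty t^{q/p-1}\, f^*(t)^q\, dt \\
&\leq \|f\|_{p_1,\infty}^q \int_0^1 t^{q(1/p - 1/p_1) - 1}\, dt + \|f\|_{p_0,\infty}^q \int_1^\infty t^{q(1/p - 1/p_0) - 1}\, dt.
\end{align*}
The first integral converges because $q(1/p - 1/p_1) > 0$ and the second converges because $q(1/p - 1/p_0) < 0$. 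This gives the desired finite bound on $\|f\|_{p,q}$.

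There is no real obstacle here; the only thing to watch is that the exponent conditions $p_0 < p < p_1$ are exactly what makes both tail integrals converge, so I would present the argument precisely so that the strict inequalities $1/p - 1/p_1 > 0$ and $1/p - 1/p_0 < 0$ are clearly visible.
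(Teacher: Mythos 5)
Your argument is correct and follows essentially the same strategy as the paper's proof: in each regime ($t$ small versus $t$ large, and the two branches of the supremum when $q=\infty$) you apply whichever weak-$L^{p_i}$ bound is sharper, exactly as the paper does with the distribution function. Working with the decreasing rearrangement $f^*(t)\leq \|f\|_{p_i,\infty}t^{-1/p_i}$ and splitting at $t=1$ simply streamlines the paper's version, which introduces thresholds $\tilde t,\overline t$ and a middle region that your formulation renders unnecessary.
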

	
	\begin{proof}
		We first consider $q= \infty$ then 
		\begin{equation}
			\label{comp: norm split in d_f<1 d_f>1}
			\| f\|_{p, \infty} = \max \{ \sup_{d_f(t)\geq 1} t d_f(t)^{1/p},\ \sup_{0\leq d_f(t)\leq 1} t d_f(t)^{1/p}\}.
		\end{equation}
		For the case $d_f(t)\leq 1$ since $p<p_1$ we have 
		\begin{equation*}
			d_f(t) ^{1/p- 1/p_1} \leq 1,
		\end{equation*}
		conversely when $d_f(t)\geq 1$ 
		\begin{equation*}
			d_f(t) ^{1/p- 1/p_0} \leq 1.
		\end{equation*}
		We can therefore bound both suprema in \eqref{comp: norm split in d_f<1 d_f>1} as 
		\begin{equation*}
			\sup_{0\leq d_f(t)\leq 1} t d_f(t)^{1/p-1/p_1 + 1/p_1} \leq \sup_{0\leq d_f(t)\leq 1} t d_f(t)^{1/p_1}=\|f\|_{p_1, \infty} <\infty
		\end{equation*}
		and 
		\begin{equation*}
			\sup_{d_f(t)\geq 1} t d_f(t)^{1/p-1/p_0 + 1/p_0} \leq \sup_{t\geq 0} t d_f(t)^{1/p_0} = \|f\|_{p_0, \infty}<\infty.
		\end{equation*} 
		
		Now let $q<\infty$. We have the quantity $td_f(t)^{1/p_1}$ which is bounded at infinity, then there exists a $\overline{t}>0$ such that 
		\begin{equation*}
			d_f(t) \lesssim \frac{1}{t^{p_1}}\ \textnormal{ for any }\ t\geq \overline{t}
		\end{equation*}
		so for large enough $t$ we have 
		\begin{equation*}
			t d_f(t)^{1/p_0} \lesssim  t^{1-p_1/p_0} \ \textnormal{ with }\ 1-\frac{p_1}{p_0} <0.
		\end{equation*}
		On the other hand $td_f(t)^{1/p_0}$ is bounded around 0 so there exists $\tilde{t}>0$ such that 
		\begin{equation*}
			d_f(t) \lesssim \frac{1}{t^{p_0}}\ \textnormal{ for any }\ 0< t\leq \tilde{t}
		\end{equation*}
		from which for small $t$ it holds
		\begin{equation*}
			t d_f(t)^{1/p_1} \lesssim  t^{1-p_0/p_1} \ \textnormal{ with }\ 1-\frac{p_0}{p_1} >0.
		\end{equation*}
		Now we can proceed to estimate the $L^{p,q}$ norm, for $\lambda \in (0,1)$ we write $\frac{1}{p}= (1-\lambda) \frac{1}{p_1} + \lambda \frac{1}{p_0}$
		\begin{align*}
			\|f\|_{p,q} =& \int_0^t (t^{1-\lambda + \lambda} d_f(t)^{(1-\lambda) \frac{1}{p_1} + \lambda \frac{1}{p_0}} )^q \frac{dt}{t} 
			= \int_0^{\tilde{t}} \ldots dt  + \int_{\tilde{t}}^{\overline{t}} \ldots dt   + \int_{\overline{t}}^\infty\ldots dt \\
			\leq& \sup_{t} (t d_f(t)^{1/p_0})^{\lambda q} \int_0^{\tilde{t}} (t d_f(t)^{1/p_1})^{(1-\lambda) q} \frac{dt}{t}\\
			& + \int_{\tilde{t}}^{\overline{t}} \ldots dt \\
			& + \sup_{t} (t d_f(t)^{1/p_1})^{(1-\lambda) q}\int_{\overline{t}}^\infty (t d_f(t)^{1/p_0})^{\lambda q} \frac{dt}{t}\\
			\lesssim & \|f\|_{p_0, \infty}^{\lambda q} \int_0^{\tilde{t}} t^{(1-p_0/p_1)(1-\lambda)q-1} dt \\
			& + \int_{\tilde{t}}^{\overline{t}} \ldots dt \\
			& + \|f\|_{p_1, \infty}^{(1-\lambda)q} \int_{\overline{t}}^\infty t^{(1-p_1/p_0)\lambda q-1}dt\\
			\simeq &\|f\|_{p_0, \infty}^{\lambda q} \  t^{(1-p_0/p_1)(1-\lambda)q}|_0^{\tilde{t}} +  \int_{\tilde{t}}^{\overline{t}} \ldots dt\\
			& + \|f\|_{p_1, \infty}^{(1-\lambda)q}\  t^{(1-p_1/p_0)\lambda q}|_{\overline{t}}^\infty
		\end{align*}
		where all the terms are finite since $(1-p_0/p_1)(1-\lambda)q>0$ and $(1-p_1/p_0)\lambda q<0$. 
	\end{proof}

	\section{Proof of Lemma \ref{lemma: algebraic ineq}.}
	\label{app: proof of ineq}
	
	In this appendix we give a proof of Lemma \ref{lemma: algebraic ineq}. 
	
	\begin{lemma}
		Let $N \in \N$ and $k =0, \ldots , N$ $l= 0, \ldots, k$. There exist $ c_{kl} \in \R$ such that 
		\begin{align*}
			\bigg| \frac{1}{|x-y|^{n-2}} - \frac{1}{|x|^{n-2}} \sum_{k=0}^N \sum_{\substack{l=0\\ k + l \leq N}}^k c_{kl}& \frac{|y|^{k+l}}{|x|^{k+l}} \cdot\frac{(x\cdot y )^{k-l}}{(|x||y|)^{k-l}}\bigg|\\
			&\lesssim  \frac{1}{|x-y|^{n-2}}  \left( \frac{|y|^{N + 1}}{|x|^{N + 1}} + \frac{|y|^{N + n-2}}{|x|^{N + n-2}}\right)
		\end{align*}
		for all $x, y \in \R^n \setminus \{0\}$ and $x \neq y $. 	
	\end{lemma}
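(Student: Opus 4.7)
The core idea is to recognize the double sum as the truncated Taylor expansion of $(1 - 2u + v)^{-(n-2)/2}$ at $(u,v)=(0,0)$, with $u = x\cdot y/|x|^2$ and $v = |y|^2/|x|^2$. The identity $|x-y|^2 = |x|^2(1 - 2u + v)$ gives
\[
\frac{1}{|x-y|^{n-2}} = \frac{1}{|x|^{n-2}}\,(1 - 2u + v)^{-(n-2)/2},
\]
and expanding the right-hand factor formally as $\sum_{a,b\ge 0} \alpha_{a,b} u^a v^b$ (binomial series), the substitution $(a,b) = (k-l, l)$ sends $\{(k,l): 0 \le l \le k,\ k+l \le N\}$ bijectively onto $\{(a,b): a, b \ge 0,\ a+2b \le N\}$. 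A direct computation checks that $\alpha_{k-l,\,l}\,u^{k-l}v^l$ equals the summand in the lemma with $c_{kl}:=\alpha_{k-l,l}$, so the lemma's double sum is precisely the truncation of the bivariate expansion to total $y$-degree $N$.

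Next I would reduce to single-variable Taylor along the segment $s\in[0,1]$ by setting $f(s):=|x-sy|^{-(n-2)}$. A short calculation shows that the coefficient of $s^j$ in the expansion of $|x|^{n-2}f(s)=(1-2su+s^2v)^{-(n-2)/2}$ gathers exactly the monomials $u^av^b$ with $a+2b=j$, so the degree-$N$ Taylor polynomial of $f$ around $s=0$, evaluated at $s=1$, reproduces $|x|^{-(n-2)} P_N(x,y)$, where $P_N$ denotes the double sum in the lemma. It therefore suffices to estimate $R_N := f(1) - \sum_{j=0}^N f^{(j)}(0)/j!$.

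In the regime $|y|\le|x|/2$, the segment stays away from the singularity, $|x-sy|\ge|x|/2$ for all $s\in[0,1]$, and $|x-y|\asymp|x|$. A short induction on $j$, based on $|x-sy|^2 = |x|^2 - 2s\,x\cdot y + s^2|y|^2$, yields $|f^{(j)}(s)| \lesssim_j |y|^j\,|x-sy|^{-(n-2+j)}$. Feeding this into the integral form $R_N = \int_0^1 \tfrac{(1-s)^N}{N!} f^{(N+1)}(s)\,ds$ gives
\[
|R_N| \lesssim \frac{|y|^{N+1}}{|x|^{N+n-1}} \asymp \frac{1}{|x-y|^{n-2}}\cdot\frac{|y|^{N+1}}{|x|^{N+1}},
\]
which is the first term on the right-hand side of the lemma.

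The main obstacle is the complementary regime $|y|>|x|/2$, where the classical Taylor machinery breaks down (the segment $s\mapsto x-sy$ may hit the singular set of $f$). Here I would abandon the Taylor remainder and bound $f(1)$ and $P_N$ separately. Since $|y|/|x|>1/2$, the factor $(|y|/|x|)^{N+n-2}$ is bounded below, so $\frac{1}{|x-y|^{n-2}} \lesssim \frac{1}{|x-y|^{n-2}}\cdot\frac{|y|^{N+n-2}}{|x|^{N+n-2}}$ for free. For $P_N$, each summand is bounded by $|c_{kl}|(|y|/|x|)^{k+l}/|x|^{n-2}$ with $k+l\le N$; splitting into the sub-cases $|y|\ge|x|$ (where $(|y|/|x|)^{k+l}\le(|y|/|x|)^N$) and $|x|/2<|y|<|x|$ (where the ratio is bounded), and using $|x-y|\le |x|+|y|\lesssim\max(|x|,|y|)$, one reduces to $|x-y|^{n-2}\lesssim|y|^{n-2}$, which is immediate. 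This delivers the second term on the right-hand side and closes the proof. The bookkeeping in the first paragraph, matching the lemma's double sum to the bivariate Taylor expansion, is the other place where care is required; once that identification is in hand, the analytic estimates are routine.
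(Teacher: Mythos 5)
Your proof is correct, and it takes a genuinely different (though related) route to the same estimate. The paper splits into three regimes, $|y|\ll|x|$, $|y|\simeq|x|$, $|y|\gg|x|$, handles the two outer ones by the same crude bounds you use (namely $|(x\cdot y)/(|x||y|)|\le 1$ together with $|x-y|\lesssim\max(|x|,|y|)$), and in the regime $|y|\ll|x|$ expands the scalar function $(1+s)^{-(n-2)/2}$ to order $N$ in the composite variable $s=\tfrac{|y|^2}{|x|^2}-2\tfrac{|y|}{|x|}\tfrac{x\cdot y}{|x||y|}$; since $s^k$ generates monomials of $y$-degree up to $2k$, the paper then needs an extra reorganization step, discarding the monomials with $k+l>N$ and absorbing them into the $O\bigl((|y|/|x|)^{N+1}\bigr)$ error, cf.\ \eqref{comp: expression taylor coeff}. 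You avoid that bookkeeping by expanding $f(s)=|x-sy|^{-(n-2)}$ in the segment parameter $s$ with the integral-form remainder: the coefficient of $s^j$ collects exactly the monomials $u^av^b$ with $a+2b=j$ (and your change of indices $(a,b)=(k-l,l)$ correctly matches the lemma's index set $k+l\le N$ to $a+2b\le N$), so the truncation is homogeneous in $y$-degree by construction, and the derivative bounds $|f^{(j)}(s)|\lesssim_j |y|^j|x-sy|^{-(n-2+j)}$ make the expansion valid on the full region $|y|\le|x|/2$ rather than only for $|y|/|x|$ sufficiently small; the complementary region $|y|>|x|/2$ is closed by the same trivial bounds, reduced to $|x-y|\lesssim|y|$. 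What your route buys is cleaner homogeneity bookkeeping and explicit, quantitative regions (no ``$\ll$''/``$\simeq$'' thresholds); what the paper's route buys is that it only invokes the one-variable Taylor polynomial of $(1+s)^{-(n-2)/2}$ and elementary manipulations, at the cost of the absorption step. Both arguments produce exactly the two error terms in the statement, and since the lemma only asserts existence of some $c_{kl}$, your choice $c_{kl}=\alpha_{k-l,l}$ is legitimate.
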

	
	\begin{proof}
		We divide the proof according to the size of $|y|$ with respect to $|x|$. 
		
		We first consider the case $|y|\gg |x|$ and use the trivial bound $\left|\frac{(x\cdot y )}{|x||y|}\right| \leq 1$, then 
		\begin{align*}
			&\left| \frac{1}{|x-y|^{n-2}} - \frac{1}{|x|^{n-2}}  \sum_{k=0}^N \sum_{\substack{l=0\\ k + l \leq N}}^k c_{kl} \frac{|y|^{k+l}}{|x|^{k+l}} \cdot\frac{(x\cdot y )^{k-l}}{(|x||y|)^{k-l}}\right| \\
			&	\lesssim  \frac{1}{|x-y|^{n-2}} +\frac{1}{|x|^{n-2}} \sum_{k=0}^N \sum_{\substack{l=0\\ k + l \leq N}}^k\frac{|y|^{k+l}}{|x|^{k+l}} \lesssim  \frac{1}{|x-y|^{n-2}} +\frac{|y|^N}{|x|^{n-2}|x|^N} \\
			&=  \frac{1}{|x-y|^{n-2}} + \frac{|y|^{N + n-2}}{|x|^{N + n-2}}\cdot \frac{1}{|y|^{n-2}} \lesssim \frac{1}{|x-y|^{n-2}} \left( 1+ \frac{|y|^{N + n-2}}{|x|^{N + n-2}}\right)\\
			& \lesssim \frac{1}{|x-y|^{n-2}} \cdot\frac{|y|^{N + n-2}}{|x|^{N + n-2}}
		\end{align*}
		where we used the bounds $|x-y|\lesssim |y|$ and $1 \lesssim \frac{|y|}{|x|}$. 
		
		Now we consider the region $|y|\simeq |x|$. Again, $\left|\frac{(x\cdot y )}{|x||y|}\right| \leq 1$, moreover $\frac{|y|}{|x|} \simeq 1 $ and $|x-y|\lesssim |x|$. Therefore, we bound trivially by 
		\begin{align*}
			&\left| \frac{1}{|x-y|^{n-2}} - \frac{1}{|x|^{n-2}}  \sum_{k=0}^N \sum_{\substack{l=0\\ k + l \leq N}}^k c_{kl} \frac{|y|^{k+l}}{|x|^{k+l}} \cdot\frac{(x\cdot y )^{k-l}}{(|x||y|)^{k-l}}\right| \\
			&	\lesssim \frac{1}{|x-y|^{n-2}} + \frac{1}{|x|^{n-2}} \lesssim  \frac{1}{|x-y|^{n-2}} \simeq  \frac{1}{|x-y|^{n-2}} \cdot\frac{|y|^{N +1}}{|x|^{N + 1}}. 
		\end{align*}
		To conclude, we look at the case $|y|\ll |x|$. We have 
		\begin{align*}
			|x-y|^{n-2} = &(|x|^2 + |y|^2 - 2 x\cdot y )^{\frac{n-2}{2}} = |x|^{n-2} (1 + \frac{|y|^2}{|x|^2} - 2 \frac{x \cdot y }{|x|^2})^{\frac{n-2}{2}}  \nonumber\\
			=&   |x|^{n-2} (1 + \frac{|y|^2}{|x|^2} - 2\frac{|y|}{|x|} \frac{(x\cdot y )}{|x||y|})^{\frac{n-2}{2}},
		\end{align*}
		which yields 
		\begin{align}
			\frac{1}{ |x-y|^{n-2} } = \frac{1}{ |x|^{n-2} } \cdot \frac{1}{ (1 + \frac{|y|^2}{|x|^2} - 2\frac{|y|}{|x|} \frac{(x\cdot y )}{|x||y|})^{\frac{n-2}{2}}} \label{comp: |x-y|}
		\end{align}
		with $\frac{|y|}{|x|}\ll 1$. We use the Taylor expansion around zero for the function $ \frac{1}{(1+ s)^{\frac{n-2}{2}} }$ given by
		\begin{align*}
			\frac{1}{(1+ s)^{\frac{n-2}{2}} } = & \sum_{k=0}^N d_k s^k + O(|s|^{N+ 1}) \quad d_k := \frac{ 1}{k!} \frac{d}{d^k} \left(\frac{1}{(1+ s)^{\frac{n-2}{2}} } \right)|_{s=0}.
		\end{align*}
		We apply this expansion with $s =  \frac{|y|^2}{|x|^2} - 2\frac{|y|}{|x|} \frac{(x\cdot y )}{|x||y|}$ which satisfies the bound $|s|\ll 1$, given that we are in the region $|y|\ll |x|$. From \eqref{comp: |x-y|} we have  
		\begin{align*}
			\frac{1}{ |x-y|^{n-2} } =  & \frac{1}{ |x|^{n-2} } \cdot  \left(\sum_{k=0}^N d_k s^k + O(|s|^{N+ 1})\right)
		\end{align*}
		and given the choice of $s$ 
		\begin{align}
			\left| \frac{1}{ |x-y|^{n-2} }-\frac{1}{ |x|^{n-2} } \cdot  \sum_{k=0}^N d_k \left( \frac{|y|^2}{|x|^2} - 2\frac{|y|}{|x|} \frac{(x\cdot y )}{|x||y|}\right)^k \right|\lesssim &\frac{|s|^{N+ 1}}{|x|^{n-2}}\nonumber\\
			\lesssim & \frac{1}{ |x|^{n-2} } \left( \frac{|y|^2}{|x|^2} + \frac{|y|}{|x|}\right)^{N+ 1} \nonumber\\
			\lesssim & \frac{1}{ |x|^{n-2} } \cdot \frac{|y|^{N+ 1}}{|x|^{N+ 1}} \label{comp: bound taylos exp}
		\end{align}
		where in the last inequality we used the fact that $\frac{|y|^2}{|x|^2}\ll \frac{|y|}{|x|}$. 
		
		Finally, we remark that we can reorganize the sum in the previous inequality as 
		\begin{align}
			\sum_{k=0}^N d_k \left( \frac{|y|^2}{|x|^2} - 2\frac{|y|}{|x|} \frac{(x\cdot y )}{|x||y|}\right)^k = & \sum_{k=0}^N d_k \frac{|y|^k}{|x|^k} \sum_{l=0}^k c_{kl} \frac{|y|^l}{|x|^l} \cdot\frac{(x\cdot y )^{k-l}}{(|x||y|)^{k-l}} \nonumber\\
			=& \sum_{k=0}^N \sum_{\substack{l=0\\ k + l \leq N}}^k c_{kl} \frac{|y|^{k+l}}{|x|^{k+l}} \cdot\frac{(x\cdot y )^{k-l}}{(|x||y|)^{k-l}} \nonumber\\
			& + O\left(\frac{|y|^{N+1}}{|x|^{N+1}}\right)  \label{comp: expression taylor coeff}
		\end{align}
		We obtain the term in \eqref{comp: expression taylor coeff} thanks to the following remark:
		\begin{align*}
			\{(k, l ) \in \N^2: (k, l) \in [0,N]\times &[0,k]\ \textnormal{and }  k + l \leq N \}\\
			=&  \{(k, l ) \in\N^2: (k, l) \in [0, N/2]\times [0,k]\}\\ & \cup \{(k, l ) \in \N^2: (k, l) \in [N/2,N]\times [0, N-k]\} 
		\end{align*}
		and for $k = N/2, \ldots ,N$ and $l = N-k, \ldots, k$ we have $l + k \geq N+1$, so
		\begin{align*}
			\left|\frac{|y|^{k+l}}{|x|^{k+l}} \cdot\frac{(x\cdot y )^{k-l}}{(|x||y|)^{k-l}}\right| \lesssim \frac{|y|^{N+1}}{|x|^{N+1}}
		\end{align*}  
		since $|y|\ll |x|$. Thanks to \eqref{comp: bound taylos exp} and \eqref{comp: expression taylor coeff} we obtain the statement, since $|x-y|\lesssim |x|$. 
		
	\end{proof}

	\bibliography{biblio.bib}
	
	\textsc{Institut de Mathématiques de Toulouse}, 118 route de Narbonne, Toulouse, F-31062 Cedex 9, France. \textit{email}: \texttt{viviana.grasselli@math.univ-toulouse.fr}

\end{document}